\newcommand{\cl}{\text{cl}}
\newcommand{\F}{\mathbb{F}}
\newcommand{\HF}{{}^a\text{HF}}
\newcommand{\N}{\mathbb{N}}
\newcommand{\ld}{{\leq d}}
\newcommand{\p}{\mathfrak{p}}
\newcommand{\R}{\mathbb{R}}
\newcommand{\ord}{\text{ord}}
\newcommand{\In}{\text{in}}
\newcommand{\Spec}{\text{Spec }}
\newcommand{\IE}{IEEEeqnarray*}
\newcommand{\ceiling}[1]{\left\lceil#1\right\rceil}
\newcommand{\floor}[1]{\left\lfloor#1\right\rfloor}
\newcommand{\paren}[1]{\left(#1\right)}
\theoremstyle{plain}
    \newtheorem{theorem}{Theorem}[section]
    \newtheorem{proposition}[theorem]{Proposition}
    \newtheorem*{introtheorem}{Theorem}
    \newtheorem{lemma}[theorem]{Lemma}
    \newtheorem{corollary}[theorem]{Corollary}
\theoremstyle{definition}
\theoremstyle{remark}
\title[Hilbert Functions in Finite Field Geometry]
    {Hilbert Functions and the Finite Degree Zariski Closure
    in Finite Field Combinatorial Geometry}
\author{Zipei Nie}
\address{Z. Nie: Department of Mathematics, Massachusetts Institute of Technology, Cambridge MA}
\email{zipei@mit.edu} 
\author{Anthony Y. Wang}
\address{A. Y. Wang: Department of Mathematics, Massachusetts Institute of Technology, Cambridge MA}
\email{anthonyw@mit.edu}
\begin{document}
    \begin{abstract}
        The polynomial method has been used recently 
            to obtain many striking
            results in combinatorial geometry.
        In this paper, we use affine Hilbert functions to obtain an
            estimation theorem in finite field geometry.
        The most natural way to state the theorem is via a sort of
            bounded degree Zariski closure operation:
            given a set, we consider all polynomials of some bounded degree
            vanishing on that set, and then the common zeros of these
            polynomials.
        For example, the degree $d$ closure of $d+1$ points on a line
            will contain the whole line, as any polynomial of degree 
            at most $d$ vanishing on the $d+1$ points must vanish on the line.
        Our result is
            a bound on the size of a finite degree closure of a given set.
        Finally, we adapt our use of Hilbert functions to the
            method of multiplicities.
    \end{abstract}
    \maketitle
    \section{Introduction}
        The polynomial method has recently been applied to problems in
            combinatorial geometry.
        The general idea of what is usually termed the polynomial method
            is to find a polynomial vanishing on a set of interest, and
            then to use algebraic methods to recover combinatorial
            information.
        Our paper primarily concerns the use of the polynomial method in the
            geometry over finite fields.
        Dvir's proof of the finite field Kakeya conjecture was a recent
            breakthrough in this area, showing that a Kakeya set in
            $n$ dimensional space over a finite field $\F_q^n$ must have size
            at least $c_nq^n$ for some constant $c_n$ \cite{dvir}.

        In this paper, we wish to broaden the understanding of the application
            of the polynomial method to this and related phenomena.
        We introduce the concept of degree $d$ closure.
        Let $k$ be a field, and $X$ a subset of $k^n$.
        The degree $d$ closure of $Y$ is defined as
        \[ 
            \cl_d(Y) = \{x\in k^n \, | \, \text{every polynomial of degree at most } d
                \text{ vanishing on } Y \text{ vanishes at } x\}.
        \]
        `Taking the degree $d$ closure' seems to be a common operation in
            applications of the polynomial method to combinatorial geometry,
            although not in this terminology.
        For example, saying the degree $d$ closure of $d+1$ points on a line
            contains the line is the same as saying every polynomial of degree $d$
            vanishing on $d+1$ points on a line vanishes on the line.
        The terminology degree $d$ closure was chosen because of its similarity
            to the Zariski closure of a set.
        Indeed, the Zariski closure of $Y$ is 
        \[ 
            \cl(Y) = \{x\in k^n \, | \, \text{every polynomial vanishing on } Y
                \text{ vanishes at } x \},
        \]
        where the only difference is we replace `every polynomial' with
            `every polynomial of degree at most $d$'.
        In our paper we show that for subsets $Y$ of $\F_q^n$, we must have
        \[ 
            \HF(\F_q^n,d)|\cl_d(Y)| \leq q^n |Y|,
        \]
        where $\HF(\F_q^n,d)$ is the affine Hilbert function of the space
            $\F_q^n$ (the dimension of the polynomials of degree at most
                $d$ on $\F_q^n$).
        In particular, if $d<q$, we have
        \[ 
            \binom{d+n}{n}|\cl_d(Y)| \leq q^n|Y|.
        \]
        This inequality gives us a bound on the size of the degree $d$ closure of $Y$
            over finite fields.

        To show the use of our inequality, we note that it will immediately solve
            our summer research project, suggested to us by Larry Guth.
        He asks: given lines $L_1,\cdots,L_c$ in $\F_q^3$, pick subsets
            $\gamma_i\subset L_i$ such that $|\gamma_i|\geq \frac q2$.
        Suppose $X=\cup L_i$, and $Y=\cup \gamma_i$.
        Does there exist a constant $C$ (independent of $q$) such that $|X|\leq C |Y|$?
        
        For this question, we can get such a constant $C$ by noting that the degree 
            $\floor{\frac{q-1}2}<\frac q2$
            closure of $Y$ contains $X$, and then applying the inequality.
        To the extent of our knowledge, the previous best bound was $|X|\leq C\log q |Y|$
            as given in \cite[Lecture 12]{guth}.
        Indeed, the method can be use to prove variants quite easily.
        For example, we can get a fixed constant in $\F_q^n$ for any fixed dimension $n$.
        Moreover, we can prove a similar bound when instead of $X$ being a union of lines,
            it is a union of curves of bounded degree:

        \begin{introtheorem}
        Given curves $\Gamma_1,\cdots,\Gamma_c$ of degree at most $\Lambda$ in $\F_q^n$, 
            pick subsets $\gamma_i\subset \Gamma_i$ such that $|\gamma_i|\geq \frac q 2$.
        Then there is a constant $C$ such that $|\cup \Gamma_i|\leq C |\cup \gamma_i|$.
        \end{introtheorem}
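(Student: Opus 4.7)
The plan is to apply the main inequality $\binom{d+n}{n}|\cl_d(Y)|\leq q^n|Y|$ with $Y=\cup\gamma_i$ and $d$ chosen so that $\cl_d(Y)$ already contains $X=\cup\Gamma_i$. The numerology is clean: if we set $d=\floor{(q-1)/(2\Lambda)}$, then $d\Lambda<q/2\leq |\gamma_i|$ for every $i$, and in particular $d<q$ so the binomial form of the inequality applies.

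The containment $X\subseteq \cl_d(Y)$ will come from a B\'ezout-type argument on each curve. Let $P$ be any polynomial of degree at most $d$ vanishing on $Y$. Decompose each $\Gamma_i$ into its irreducible components, each still of degree at most $\Lambda$. If $P$ did not vanish identically on some component $\Gamma_i'$, then the restriction of $P$ to $\Gamma_i'$ would have at most $d\cdot\deg(\Gamma_i')\leq d\Lambda$ zeros; but the subset $\gamma_i\cap \Gamma_i'$ (for the component containing most of $\gamma_i$) can be forced to exceed this bound, contradicting that $P$ vanishes on $\gamma_i$. Running this for every component shows $P$ vanishes on all of $\Gamma_i$, hence on $X$. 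This step, together with a clean statement of B\'ezout for curves in $\F_q^n$, is the main technical obstacle — in particular one needs the fact that an irreducible curve of degree $\Lambda$ meets a hypersurface of degree $d$ in at most $d\Lambda$ points unless it is contained in the hypersurface.

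Once the containment is established, the main inequality yields
\[
    |X|\leq |\cl_d(Y)|\leq \frac{q^n}{\binom{d+n}{n}}|Y|.
\]
For fixed $n$ and $\Lambda$, as $q\to\infty$ we have $d\sim q/(2\Lambda)$ so $\binom{d+n}{n}\sim q^n/(n!(2\Lambda)^n)$, giving $q^n/\binom{d+n}{n}\leq C(n,\Lambda)$ independent of $q$. For the finitely many small values of $q$ where $d$ might degenerate (e.g.\ $q\leq 2\Lambda$), the trivial bound $|X|\leq q^n\leq (2\Lambda)^n\leq (2\Lambda)^n|Y|$ (valid whenever $|Y|\geq 1$) absorbs these exceptional cases into the constant.

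Finally, to package the estimate into a single constant, I would take $C=\max\bigl(n!(2\Lambda+1)^n,\,(2\Lambda)^n\bigr)$ or a similar explicit expression that dominates both regimes. The only nontrivial algebraic input is the B\'ezout-style intersection bound on curves; everything else is combinatorics and an application of the already-established Hilbert function inequality.
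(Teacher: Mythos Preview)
Your overall strategy---choose $d$ with $d\Lambda<q/2$, show $X\subset\cl_d(Y)$, then invoke the bound $\binom{d+n}{n}|\cl_d(Y)|\le q^n|Y|$---is exactly the paper's. The choice $d=\floor{(q-1)/(2\Lambda)}$ and the asymptotic analysis of the constant are fine.

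Where you diverge is the containment step. In the paper a ``curve of degree $\Lambda$'' means the image of a map $\lambda\mapsto(C_1(\lambda),\dots,C_n(\lambda))$ with each $C_j\in\F_q[\lambda]_{\le\Lambda}$. With that definition the containment is a one-line root count: if $P$ has degree $\le d$ and vanishes on $\gamma_i$, then $P(C_1(\lambda),\dots,C_n(\lambda))$ is a univariate polynomial of degree $\le d\Lambda<q/2\le|\gamma_i|$ with at least $|\gamma_i|$ roots, hence is identically zero, so $P$ vanishes on $\Gamma_i$. No B\'ezout, no irreducible components.

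Your B\'ezout route instead treats $\Gamma_i$ as a degree-$\Lambda$ variety, and there the decomposition argument has a real gap: you only control the component carrying ``most of $\gamma_i$,'' yet then claim the argument runs ``for every component.'' It does not---other components may meet $\gamma_i$ in fewer than $d\Lambda$ points. Worse, the gap is not repairable in that generality: if one allows reducible variety-curves, the theorem is false. Take $\Gamma_i=L\cup L_i$ (a fixed line $L$ together with a varying line $L_i$), and let every $\gamma_i$ be the same $\lceil q/2\rceil$ points on $L$; then $|\cup\gamma_i|\le q$ while $|\cup\Gamma_i|$ can be of order $q^2$. So either restrict to irreducible curves (where B\'ezout does give $|\{P=0\}\cap\Gamma_i|\le d\Lambda$ directly, no decomposition needed) or, better, use the paper's parametrized definition and drop B\'ezout entirely.
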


        This follows from the bound by noting that the degree $d$ closure of 
            $\cup \gamma_i$ contains $\cup \Gamma_i$ for $d<\frac{q}{2\Lambda}$.
        Alternatively, we could have assumed instead of $|\gamma_i|\geq \frac q2$ that
            $|\gamma_i|>q^\alpha$ for some $0< \alpha < 1$ to get

        \begin{introtheorem} 
        Given lines $L_1,\cdots,L_c$ in $\F_q^n$, pick subsets
            $\gamma_i\subset \Gamma_i$ such that $|\gamma_i|>q^\alpha$.
        Then $|\cup L_i|\leq n!q^{n(1-\alpha)}|\cup \gamma_i|$.
        \end{introtheorem}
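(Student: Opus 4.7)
My plan is to apply the paper's main inequality $\binom{d+n}{n}|\cl_d(Y)|\leq q^n|Y|$ (valid when $d<q$) with $Y=\cup_i\gamma_i$ and a carefully chosen $d$. I want $d$ to be small enough that $\cup_i L_i\subseteq \cl_d(Y)$, and large enough that the factor $q^n/\binom{d+n}{n}$ is at most $n!q^{n(1-\alpha)}$. The choice that achieves both is $d=\floor{q^\alpha}$, which is admissible ($d<q$) because $\alpha<1$.

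First I would verify the containment $\cup_i L_i\subseteq\cl_d(Y)$. Following the same observation already used in the paper's line example, a polynomial of degree at most $d$ restricts to a univariate polynomial of degree at most $d$ along any line $L_i$, and therefore vanishes identically on $L_i$ once it has more than $d$ zeros there. Since $|\gamma_i|>q^\alpha$ and $|\gamma_i|$ is an integer, we have $|\gamma_i|\geq\floor{q^\alpha}+1=d+1$, so any polynomial of degree at most $d$ vanishing on $\gamma_i$ must vanish on all of $L_i$. In particular this applies to polynomials vanishing on $Y$, which gives $L_i\subseteq\cl_d(Y)$ for each $i$.

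Next I would estimate the constant. Writing $\binom{d+n}{n}=\frac{(d+1)(d+2)\cdots(d+n)}{n!}\geq\frac{(d+1)^n}{n!}$ and using $d+1>q^\alpha$ yields $\binom{d+n}{n}>q^{n\alpha}/n!$, so $q^n/\binom{d+n}{n}<n!q^{n(1-\alpha)}$. Combined with the main inequality this gives
\[
    |\cup_i L_i|\leq|\cl_d(Y)|\leq n!q^{n(1-\alpha)}|\cup_i\gamma_i|,
\]
which is the claimed bound.

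The whole argument is really just a matter of choosing $d$ correctly; everything else is a short calculation. I do not expect any genuine obstacle once the main inequality is available. The only spot requiring a little care is the off-by-one in passing from $|\gamma_i|>q^\alpha$ to $|\gamma_i|\geq d+1$, which relies on the integrality of $|\gamma_i|$.
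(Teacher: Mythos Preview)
Your proposal is correct and follows essentially the same approach as the paper: choose $d=\floor{q^\alpha}$, observe that $X=\cup_i L_i\subseteq\cl_d(Y)$ since each $\gamma_i$ has at least $d+1$ points, and then apply Theorem~\ref{finite-field-closure-bound} together with the estimate $\HF(\F_q^n,d)=\binom{d+n}{n}\geq (d+1)^n/n!>q^{n\alpha}/n!$ for $d<q$. Your write-up simply makes explicit the off-by-one and binomial estimates that the paper leaves to the reader.
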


        This follows from the inequality by noting that the degree $q^\alpha$ closure
            of $\cup \gamma_i$ contains $\cup L_i$.
        We can also restate proofs of old bounds in this language.
        For example, in Dvir's proof of the finite field Kakeya conjecture \cite{dvir},
            part of his argument was establishing that the degree $q-1$ closure of
            any Kakeya set is all of $\F_q^n$.
        Indeed, he argues that since for any Kakeya set $K$, there is a line 
            in every direction, any polynomial of degree $\leq q-1$ vanishing on $K$
            must vanish at the hyperplane at infinity.
        But the only affine polynomial vanishing at the plane at infinity is $0$:
            thus the degree $q-1$ closure of $K$ must be $\F_q^n$.
        We can conclude this version of his proof by applying the bound.
        The general argument for this and related proofs is to find the smallest degree
            $d$ of a polynomial vanishing on a set, and then use the fact that for finite
            sets $X$, there exists
            a polynomial of degree $n|X|^{\frac 1n}$ vanishing on it.
        The bound can be viewed as a sort of generalization of this argument.
        If $d$ is the smallest degree of a polynomial vanishing on $X$, then
            the degree $d-1$ closure of $X$ will be the whole space.
        If we are working in the space over a finite field, then our bound will
            give the same bound on the size of $X$ as the original argument.

        To the bound, we first establish that the first $d$
            terms of the affine Hilbert function of $Y$ and $\cl_d(Y)$ agree.
        Then, we use the affine Hilbert function to estimate the size of a set
            of monomials spanning the polynomial functions on $\cl_d(Y)$,
        As we are working over a finite field, 
            the set $\cl_d(Y)$ is finite, so the polynomial functions on it
            are precisely the functions on it and has dimension equal to
            the size of $\cl_d(Y)$.
        Thus, we get a bound on $|\cl_d(Y)|$.

        We also generalize our results to the method of multiplicities
            which has been applied to this area by Saraf and Sudan \cite{saraf-sudan}, and by
            Dvir, Kopparty, Saraf and Sudan \cite{dvir-kopparty-saraf-sudan}.
        This method used the fact that polynomials of bounded degree
            vanishing to high order on a set must vanish to high order 
            on a larger set to get better bounds than was obtained in 
            \cite{dvir} on the size of a Kakeya set.
        Similar to before, we can define an operation $\cl_d^{\ell,m}(Y)$ to be
            the set of points $x$ such that all polynomials of degree at most $d$
            vanishing to order $m$ on $Y$ vanish to order $\ell$ at $x$.
        Whereas before, the degree $d$ closure operation of a set 
            was related to the ring of polynomials on the set,
            this new operation is related to the polynomials on the set up to some
            order of vanishing.
        More precisely, just like we can define the ideal $I(Y)$ to be the
            set of polynomials vanishing at $Y$, we can define the ideal
            $I^m(Y)$ to be the set of polynomials vanishing to order $m$ at
            $Y$.
        Now just as the affine Hilbert function $\HF(Y,d)$ counts the dimension
            of polynomials of degree at most $d$ of the ring $k[x_1,\cdots,x_n]/I(Y)$,
            we can define $\HF^m(Y,d)$ to count the dimension of polynomials
            of degree at most $d$ of the ring $k[x_1,\cdots,x_n]/I^m(Y)$.
        Again, the fact that $X\subset \cl_d^{\ell,m}(Y)$ gives information
            about the modified Hilbert functions: namely that 
            $\HF^\ell(X,d)\leq \HF^m(Y,d)$.
        We can then use the modified Hilbert functions to recover combinatorial
            information about $X$ and $Y$.

        Framed in this manner, we can give a speculative reason why considering
            vanshing to higher multiplicities improves the bounds gotten 
            without considering such higher order vanishings.
        For this, recall the Schwartz-Zippel lemma with multiplicity, which 
            states that if $X$ is a finite subset of a line $L$,
            every polynomial of degree less than $|X|(m-\ell+1)+\ell-1$ vanishing to order
            $m$ on $X$ must vanish to order $\ell$ on $L$, i.e. $L\subset\cl_d^{\ell,m}(X)$
            for such $d$.
        Recall that for proving statements such as the finite field Kakeya and
            the finite field Nikodym (without mulitplicities), we only used lines to show that
            the degree $d$ closure for some set contains some larger set.
        The reason using multiplicity gives better bounds is that $\cl_d^{\ell,m}$ allows us
            to isolate these lines better than $\cl_d$.
        For example, if $Y$ is a set of points, then $\cl_d(Y)$ must contain all lines passing
            through at least $d+1$ of the points of $Y$.
        However, it must also contain all lines passing through at least $d+1$ points of this
            larger set.
        And so on.
        By using $\cl_d^{\ell,m}$ for properly chosen $\ell$, $m$ and $d$, we can reduce the
            amount of baggage that comes along with the `and so on'.
        On the flip other hand, if we know the size of $X$ and that $X\subset |\cl_d^{\ell,m}(Y)|$
            for properly chosen $\ell$, $m$ and $d$,
            we know $X$ was gotten from $Y$ `with minimal baggage', and so our bound
            on the size of $Y$ will be larger.

        As an application of our consideration of multiplicities, we apply it to 
            the problem of Statistical Kakeya for Curves in 
            Dvir, Kopparty, Saraf and Sudan's paper
            \cite{dvir-kopparty-saraf-sudan}.
        Namely, we show 
        \begin{introtheorem}[Statistical Kakeya for Curves]
            Let $X$ and $Y$ be subsets of $\F_q^n$.
            Suppose that for every point $x\in X$, there is a curve
                $C_x$ of degree at most $\Lambda$
                through $x$ which intersects $Y$ in at least $\tau$
                points.
            Then
            \[ 
                |X| \leq \paren{1+\frac{\Lambda(q-1)}{\tau}}^n |Y|.
            \]
        \end{introtheorem}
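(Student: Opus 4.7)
The plan is to combine the Schwartz--Zippel lemma with multiplicity, the multiplicity closure $\cl_d^{\ell,m}$, and the multiplicity Hilbert function inequality $\HF^\ell(X,d) \leq \HF^m(Y,d)$ noted in the introduction. The proof should mirror the paper's main inequality, adapted to the multiplicity setting.

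First I would show $X \subset \cl_d^{\ell,m}(Y)$ for appropriately chosen $(\ell, m, d)$. For each $x \in X$, parametrize an irreducible component of the curve $C_x$ through $x$ by a map $\phi$ whose coordinate functions have degree at most $\Lambda$. Given any polynomial $f$ of degree at most $d$ vanishing to order $\geq m$ on $Y$, the pullback $f \circ \phi$ is a univariate polynomial of degree at most $d\Lambda$ vanishing to order $\geq m$ at (at least) $\tau$ preimages of the points of $Y \cap C_x$. By the multiplicity Schwartz--Zippel lemma quoted in the introduction, if $d\Lambda < \tau(m-\ell+1) + \ell - 1$, then $f \circ \phi$ vanishes to order $\geq \ell$ on the whole parameter line. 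A Hasse-derivative analysis of the pullback then promotes this to vanishing of $f$ to order $\geq \ell$ at $x$, so $x \in \cl_d^{\ell,m}(Y)$.

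Next I would apply $\HF^\ell(X, d) \leq \HF^m(Y, d)$, bounding the right-hand side by $\HF^m(Y, d) \leq |Y|\binom{n+m-1}{n}$ via the embedding of $\F_q[x_1,\dots,x_n]/I^m(Y)$ into the space of order-$m$ jets at points of $Y$. For the left-hand side I would establish, in parallel with the proof of the paper's main inequality, the multiplicity analog $\HF^\ell(\F_q^n, d) |X| \leq q^n \HF^\ell(X, d)$. Combining,
\[
|X| \leq |Y| \cdot \frac{q^n \binom{n+m-1}{n}}{\HF^\ell(\F_q^n, d)},
\]
and since the minimal-degree element of $I^\ell(\F_q^n)$ is $(x_i^q - x_i)^\ell$ of degree $q\ell$, we have $\HF^\ell(\F_q^n, d) = \binom{d+n}{n}$ whenever $d < q\ell$.

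Finally, I would set $d = q\ell - 1$ at the Hilbert boundary and take $m$ to be the smallest integer for which the Schwartz--Zippel constraint $d\Lambda < \tau(m-\ell+1)+\ell-1$ holds; letting $\ell\to\infty$, the ratio $q^n \binom{n+m-1}{n}/\binom{n+d}{n}$ should approach $(1 + \Lambda(q-1)/\tau)^n$, yielding the claimed bound. The main obstacles are, first, the Hasse-derivative bookkeeping needed to upgrade the vanishing of $f\circ\phi$ on the parameter line to vanishing of $f$ to order $\ell$ at the point $x$ in $\F_q^n$ (with potentially subtle behavior when $C_x$ fails to be smooth at $x$), and, second, the careful integer and limit arithmetic required to extract precisely the constant $(1 + \Lambda(q-1)/\tau)^n$ rather than a slightly weaker value from the Hilbert-function ratio.
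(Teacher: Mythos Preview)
Your approach is correct and reaches the stated bound, but it takes a different and heavier route than the paper on one key step. To lower-bound $\HF^\ell(X,d)$ you invoke the multiplicity analogue of the FKG bound, $\HF^\ell(\F_q^n,d)\,|X|\le q^n\,\HF^\ell(X,d)$, which you would have to establish separately (in the paper this is Theorem~\ref{multiplicity-set-bound}, proved only afterward and requiring the full machinery of Section~\ref{FKG} adapted to multiplicities). The paper avoids this entirely: it simply chooses $d=(\ell-1)q+n(q-1)$ large enough that Lemma~\ref{multiplicity-hilbert-equality} gives the \emph{exact} value $\HF^\ell(X,d)=\binom{\ell+n-1}{n}|X|$, an elementary fact about $I^\ell(\F_q^n)$. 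From there the two arguments coincide: combine with $\HF^m(Y,d)\le\binom{m+n-1}{n}|Y|$, pick the minimal $m$ allowed by the Schwartz--Zippel constraint, and let $\ell\to\infty$. Since your $d=q\ell-1$ and the paper's $d$ are both $\sim q\ell$, the limiting ratio $(m/\ell)^n$ is the same. In short, your FKG detour works but is unnecessary here; the paper's direct evaluation of $\HF^\ell(X,d)$ at large $d$ is both simpler and self-contained, while your route buys nothing extra for this theorem (though it is precisely the right tool for the later bound on $|\cl_d^{\ell,m}(Y)|$). Your flagged concern about Hasse-derivative bookkeeping on curves is real and is handled in the paper only by citation to Proposition~\ref{multiplicity-vanish}.
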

        The essential difference in our arguments is that they use a polynomial
            vanishing to high order on a set to bound the size of the set, while
            we use information from the modified Hilbert function.
        Although we achieve the same bound as them for the case where the set $X$
            is $\F_q^n$ (which is the case of interest in the applications
            to the Kakeya problem), our bound is strictly better in nearly
            all other cases, namely when the dimension is at least $2$ and
            $X\subsetneq \F_q^n$.

        We expect that using multiplicities will give better bounds
            when we know the set $X\subset \cl_d(Y)$ via an argument using lines directly and
            not some iterated argument.
        In line with this, the statistical Kakeya for curves will improve the constants in
            the some of the bounds we got before from just applying the bound for $\cl_d$,
            such as that for our summer research problem.

        Lastly, we give a bound on $\cl_d^{\ell,m}(Y)$ for $Y\in \F_q^n$.
        We show that
        \[ 
            \HF^\ell(\F_q^n,d) |\cl_d^{\ell,m}(Y)|\leq q^n \binom{m+n-1}{n} |Y|,
        \]
        whose proof follows the same general outline as the proof of
            the bound on $\cl_d(Y)$.

        The general outline of the paper is as follows.

        In section \ref{hilbert-functions}, we give
            preliminaries on affine Hilbert functions.

        In section \ref{monomial-order}, we give
            preliminaries on monomial orders, which allow us to reduce
            combinatorial questions on Hilbert functions of a general
            ideal to that of a monomial ideal.

        In section \ref{FKG}, we prove a bound we need in the sequel via
            the FKG inequality.
        
        In section \ref{zariski-closure}, we define the degree $d$ closure,
            and prove our bound for its size.

        In section \ref{multiplicity}, we adapt our methods
            to higher multiplicities.
        We apply this to the Statistical Kakeya for Curves and to give bounds
            on  $\cl_d^{\ell,m}(Y)$.

        \textbf{Acknowledgements}
        This research grew out of a SPUR (Summer Program
            in Undergraduate Research) project which occurred in the summer
            of 2013.
        SPUR is a program for MIT undergraduates founded by 
            Hartley Rogers and carried out each summer.
        We would like to thank the director of the 2013 SPUR program,
            Slava Gerovitch.
        We would also like to thank Pavel Etingof and Jacob Fox
            for being the faculty advisors for the program and for meeting
            with us every week to discuss our project.
        We would especially like to thank Larry Guth
            for proposing our summer research problem.

        We would like to thank Ben Yang for being our graduate mentor
            over the SPUR program, meeting with us every day to discuss
            our progress on the problem.
        We managed to solve the problem via a much different argument than
            is given in this paper. 

        The majority of the results of this paper were obtained in the fall following our
            summer program.
        We would like to thank Larry Guth, Josh Zahl and Ben Yang for
            looking at a preliminary version of our paper.
        All errors are of course our own.

    \section{Affine Hilbert Functions}\label{hilbert-functions}
        In this section, we review preliminaries and set our notation
            for affine Hilbert functions.
        (Our reference for this material is \cite[Chapter 9 Section 3]{cox}).

        We work over the ring of polynomials 
            $A=k[x_1,\cdots,x_n]$ over a field $k$.
        Let $A_\ld$ denote the polynomials of degree at most $d$.
        For an ideal $I$ of $k[x_1,\cdots,x_n]$, 
            let $I_\ld$ denote the polynomials of degree at most $d$
            in $I$.
        Note that $A_\ld$ and $I_\ld$ are both vector spaces over
            $k$.
        The \textbf{affine Hilbert function} of $I$, denoted by $\HF_I$ 
            is given by
        \[
            \HF_I(d) = \dim A_\ld / I_\ld = \dim A_\ld - \dim I_\ld.
        \]
        As we will only be using the affine Hilbert function throughout
            this paper, we will typically just call it the Hilbert function.
        It is clear that $\HF_I(d)$ is nondecreasing in $d$ and that if
            $I\subset J$ are ideals, then $\HF_I(d)\geq \HF_J(d)$.

        Given a set $X$ of $k^n$, let $I(X)$ denote the ideal of
            polynomial functions vanishing on
            $X$, that is,
        \[ 
            I(X) = \{ P\in k[x_1,...,x_n] \, | \, P(x) = 0 \text{ for all }
                x\in X\}.
        \]
        The \textbf{Hilbert function} of $X$, which we denote by $\HF(X,d)$ 
            is then defined as the Hilbert function of the ideal $I(X)$.
        Again, the Hilbert function $\HF(X,d)$ is nondecreasing in $d$.
        Moreover, if $X\subset Y$, then $I(X)\supset I(Y)$, so
            $\HF(X,d)\leq \HF(Y,d)$.

        Given a polynomial $P\in k[x_1,\cdots,x_n]$ and a finite set
            $Y=\{y_1,\cdots,y_s\}\in k^n$, 
            we can define the evaluation map sending $P$ to its values on
            $Y$, i.e.
        \[ 
            P \mapsto (P(y_1),\cdots,P(y_s)) \in k^{|Y|}.
        \]
        This map is clearly linear.
        The polynomials also surject: the ideals
            $I(\{y_1\}),\cdots, I(\{y_s\})$ are maximal and therefore
            pairwise coprime.
        The Chinese remainder theorem then immediately shows that 
            the polynomial functions surject onto $k[x_1,\cdots,x_n]/I(Y)$.
        Moreover, we see the ideal $I(\{y_1,\cdots,y_s\})
                =I(\{y_1\})\cdots I(\{y_s\})$.

        We conclude:
        \begin{lemma}
            If $Y=\{y_1,\cdots,y_s\}$ is a finite subset of $k^n$ and $d\geq |Y|-1$, then 
            \[ 
                \HF(Y,d) = |Y|.
            \]
        \end{lemma}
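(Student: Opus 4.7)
The plan is to use the evaluation map $\varphi : A \to k^{|Y|}$ sending $P \mapsto (P(y_1),\ldots,P(y_s))$, whose kernel is exactly $I(Y)$. The discussion preceding the lemma already establishes, via the Chinese remainder theorem, that $\varphi$ is surjective, so $A/I(Y) \cong k^{|Y|}$. What remains is the degree-bounded analogue: I want to show that the restriction $\varphi|_{A_{\leq d}} : A_{\leq d} \to k^{|Y|}$ is already surjective once $d \geq |Y|-1$. Since its kernel is $I(Y)_{\leq d}$, surjectivity immediately yields $\HF(Y,d) = \dim A_{\leq d} - \dim I(Y)_{\leq d} = |Y|$.

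To prove surjectivity of $\varphi|_{A_{\leq d}}$ it is enough to hit each standard basis vector, i.e.\ to construct for every index $i$ a polynomial $P_i \in A_{\leq |Y|-1}$ with $P_i(y_i) = 1$ and $P_i(y_j) = 0$ for $j \neq i$. This is a Lagrange-style interpolation: for each $j \neq i$, since $y_i \neq y_j$ they differ in some coordinate, so there is an affine linear form $\lambda_{ij}$ with $\lambda_{ij}(y_j) = 0$ and $c_{ij} := \lambda_{ij}(y_i) \neq 0$. Setting
\[
    P_i(x) = \prod_{j \neq i} \frac{\lambda_{ij}(x)}{c_{ij}}
\]
produces a polynomial of degree exactly $|Y|-1 \leq d$ with the required evaluation pattern. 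The $P_i$'s map onto a basis of $k^{|Y|}$, giving the surjectivity.

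Combining, $\HF(Y,d) = \dim(\mathrm{image}\,\varphi|_{A_{\leq d}}) = |Y|$ for all $d \geq |Y|-1$. There is no real obstacle here: the only thing to be careful about is that the construction of $\lambda_{ij}$ uses the hypothesis $y_i \neq y_j$ (points of $Y$ are distinct) together with the fact that over any field two distinct points can be separated by a single affine coordinate function, which is immediate. The degree count $\deg P_i = |Y|-1$ is precisely what makes the bound $d \geq |Y|-1$ the right one.
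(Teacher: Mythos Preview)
Your proof is correct and follows essentially the same approach as the paper: both construct Lagrange-type interpolants $P_i$ of degree $|Y|-1$ by multiplying together affine linear forms separating $y_i$ from each $y_j$, then conclude that the degree-$\leq d$ polynomials surject onto $k^{|Y|}\cong A/I(Y)$. Your write-up is slightly more explicit about the evaluation-map framework and the normalization constants, but the argument is the same.
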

        \begin{proof}
            For $j\geq 2$, it is not difficult to find a polynomial vanishing on $y_j$ but
                not on $y_1$.
            (Indeed, some component of $y_j$ and $y_1$ must be different, since otherwise they
                would be the same point.)
            By multiplying these polynomials together and normalizing, we can find a polynomial
                $p_1$ of degree at most $|Y|-1$ which vanishes on $y_2,\cdots,y_s$ but is equal to
                $1$ on $y_1$.
            Similarly, we can find polynomials $p_i$ which vanish on $y_j$ for $j\neq i$ but is
                equal to $1$ on $y_i$.

            Thus, the polynomials of degree at most $|Y|-1$ surject onto $k[x_1,\cdots,x_n]/I(Y)$,
                so $\HF(Y,d)=|Y|$ for $d\geq |Y|-1$.
        \end{proof}

        Unfortunately, this is the best bound on $d$
            that works for all sets $Y$ and all fields $k$.
        To see why, take all $Y$ points to lie on a line.
        However, for finite fields, we can do better:

        \begin{lemma}\label{surjection-lemma}
            We have $I(\F_q^n)=(x_1^q-x_1,\cdots,x_n^q-x_n)$.
            Thus, when working over the space $\F_q^n$, the set of monomials
            \[ 
                S = \{x_1^{\alpha_1} \cdots x_n^{\alpha_n} \, | \,
                    0\leq \alpha_i \leq q-1 \}
            \]
            form a basis for $\F_q[x_1,\cdots,x_n]/I(\F_q^n)$.
        \end{lemma}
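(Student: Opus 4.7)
The plan is to prove both claims simultaneously via a dimension-counting sandwich argument.

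First I would handle the easy inclusion $(x_1^q - x_1, \ldots, x_n^q - x_n) \subseteq I(\F_q^n)$: every element $a \in \F_q$ satisfies $a^q = a$ (since $\F_q^\times$ has order $q-1$, and $0^q = 0$), so each generator vanishes on $\F_q^n$, hence so does every element of the ideal they generate. Write $J = (x_1^q - x_1, \ldots, x_n^q - x_n)$, so $J \subseteq I(\F_q^n)$, which gives a surjection $\F_q[x_1,\ldots,x_n]/J \twoheadrightarrow \F_q[x_1,\ldots,x_n]/I(\F_q^n)$.

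Next I would show that the monomials in $S$ span $\F_q[x_1,\ldots,x_n]/J$. Given any monomial $x_1^{\alpha_1}\cdots x_n^{\alpha_n}$, whenever some exponent $\alpha_i \geq q$, the congruence $x_i^q \equiv x_i \pmod{J}$ lets us replace $x_i^{\alpha_i}$ by $x_i^{\alpha_i - q + 1}$, which has strictly smaller exponent. Iterating this in each coordinate reduces every monomial to one in $S$. Hence $\dim_{\F_q}(\F_q[x_1,\ldots,x_n]/J) \leq |S| = q^n$.

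To close the sandwich, I would invoke the preceding lemma applied to the finite set $Y = \F_q^n$ (of size $q^n$), which gives $\HF(\F_q^n, d) = q^n$ for $d$ sufficiently large, i.e., $\dim_{\F_q}(\F_q[x_1,\ldots,x_n]/I(\F_q^n)) = q^n$. Chaining the inequalities,
\[
    q^n \;\geq\; \dim \F_q[x_1,\ldots,x_n]/J \;\geq\; \dim \F_q[x_1,\ldots,x_n]/I(\F_q^n) \;=\; q^n,
\]
all inequalities become equalities. The first equality forces $S$ to be linearly independent in $\F_q[x_1,\ldots,x_n]/J$ (since it spans a $q^n$-dimensional space and has $q^n$ elements), and the second equality forces the surjection $\F_q[x_1,\ldots,x_n]/J \twoheadrightarrow \F_q[x_1,\ldots,x_n]/I(\F_q^n)$ to be an isomorphism, which is equivalent to $J = I(\F_q^n)$. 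Then $S$ is a basis of $\F_q[x_1,\ldots,x_n]/I(\F_q^n)$ as claimed.

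There is no real obstacle here; the only subtle point is remembering that the spanning argument and the preceding lemma together provide both the upper and lower bounds needed for the dimensions to be pinned to $q^n$, so that no explicit direct verification of linear independence of the $q^n$ evaluation vectors is required.
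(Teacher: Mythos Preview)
Your proof is correct and follows essentially the same sandwich/dimension-counting argument as the paper: both show $S$ spans $\F_q[x_1,\ldots,x_n]/J$, note $J\subseteq I(\F_q^n)$, and use $\dim \F_q[x_1,\ldots,x_n]/I(\F_q^n)=q^n$ to force all inequalities to equalities. The only cosmetic difference is that the paper obtains the last dimension directly from surjectivity of the evaluation map (established just before the lemma), whereas you cite the preceding lemma, which amounts to the same thing.
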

        \begin{proof}
            Note that the set of monomials 
            \[
                S=\{x_1^{a_1}\cdots x_n^{a_n} \, | \, 
                    0\leq a_i\leq q-1\}
            \]
                form a spanning set of 
                $\F_q[x_1,\cdots,x_n]/(x_1^q-x_1,\cdots,x_n^q-x_n)$.
            Now the evaluation map on $\F_q^n$ surjects
                $A=\F_q[x_1,\cdots,x_n]$ onto a vector space of dimension
                $q^n$.
            Clearly the kernel $I(\F_q^n)$ 
                contains $(x_1^q-x_1,\cdots,x_n^q-x_n)$.
            Then
            \[ 
                q^n = \dim A/I(\F_q^n)
                    \leq A/
                        (x_1^q-x_1,\cdots,x_n^q-x_n)
                    \leq |S| = q^n.
            \]
            Thus, the inequalities are equalities and
                $I(\F_q^n)=(x_1^q-x_1,\cdots,x_n^q-x_n)$.
            We conclude that
                $S$ forms a basis for $\F_q[x_1,\cdots,x_n]/I(\F_q^n)$.
        \end{proof}
        
        \begin{corollary}\label{hilbert-polynomial-bound}
            If $Y$ is a subset of $\F_q^n$ and $d\geq n(q-1)$, then
            \[ 
                \HF(Y,d) = |Y|.
            \]
        \end{corollary}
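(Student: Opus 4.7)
The plan is to derive the corollary directly from Lemma \ref{surjection-lemma}. The key observation is a degree count: every monomial in the basis
\[
    S = \{x_1^{\alpha_1}\cdots x_n^{\alpha_n} \, | \, 0\leq \alpha_i \leq q-1\}
\]
has total degree at most $n(q-1)$, so once $d \geq n(q-1)$ the entire set $S$ already lies in $A_{\leq d}$.

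First, I would use this to see that the evaluation map $\mathrm{ev}\colon A_{\leq d} \to \F_q^{\F_q^n}$, $P \mapsto (P(x))_{x\in\F_q^n}$, is surjective. By Lemma \ref{surjection-lemma}, the image of $S$ in $A/I(\F_q^n)$ is a basis, and since $A/I(\F_q^n)$ is identified with $\F_q^{\F_q^n}$ via evaluation, the $q^n$ elements of $S$ already hit a basis of the target. Because $S \subseteq A_{\leq d}$, the restricted evaluation $\mathrm{ev}$ on $A_{\leq d}$ is surjective.

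Next, for an arbitrary subset $Y \subseteq \F_q^n$, composing $\mathrm{ev}$ with the coordinate restriction $\F_q^{\F_q^n} \to \F_q^{Y}$ (which is obviously surjective) produces a surjection $A_{\leq d} \twoheadrightarrow \F_q^{Y}$. The kernel of this composite is exactly $I(Y)_{\leq d}$, so
\[
    \HF(Y,d) \;=\; \dim A_{\leq d}/I(Y)_{\leq d} \;=\; |Y|.
\]

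I do not anticipate any real obstacle: the corollary is essentially a degree bookkeeping consequence of the preceding lemma. The only thing worth confirming is the trivial upper bound $\HF(Y,d) \leq |Y|$ (which one always has because evaluation on $Y$ lands in an $|Y|$-dimensional space), ensuring the surjectivity argument above forces equality rather than a strict inequality.
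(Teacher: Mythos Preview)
Your proof is correct and follows essentially the same approach as the paper: both use Lemma~\ref{surjection-lemma} to observe that every monomial in $S$ has degree at most $n(q-1)$, conclude that $A_{\leq d}$ surjects onto the functions on $\F_q^n$, and then restrict to $Y$. Your version simply spells out the kernel identification and the role of the evaluation map more explicitly than the paper's terse proof.
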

        \begin{proof}
            Now the maximum degree of a polynomial in the set of monomials
                $S$ of Lemma \ref{surjection-lemma} is $n(q-1)$, so
                we see that 
                $\F_q[x_1,\cdots,x_n]_{\leq n(q-1)}$ 
                surjects onto the functions on $\F_q^n$.
            In particular, it surjects onto the functions on $Y$.
            Therefore,
            \[ 
                \HF(Y,d) = |Y|,
            \]
            for all $d\geq n(q-1)$.
        \end{proof}

        The evaluation map to get another result on
            Hilbert functions:

        \begin{lemma}\label{union-lemma}
            Let $X_1$, $\cdots$, $X_n$ be subsets of $k^n$ and 
                $X=\bigcup X_i$.
            Then
            \[ 
                \HF(X,d) \leq \sum_{i=1}^n \HF(X_i,d).
            \]
        \end{lemma}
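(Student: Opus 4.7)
The plan is to use the fact that $I(X) = \bigcap_{i=1}^n I(X_i)$ whenever $X = \bigcup X_i$, together with a direct-sum trick on the quotient spaces of polynomials of degree at most $d$.

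First I would define a $k$-linear map
\[
    \varphi : A_{\leq d} \longrightarrow \bigoplus_{i=1}^n A_{\leq d}/I(X_i)_{\leq d}
\]
by sending a polynomial $P$ of degree at most $d$ to the tuple of its residues modulo each $I(X_i)_{\leq d}$. The kernel of $\varphi$ is exactly the set of $P \in A_{\leq d}$ lying in $I(X_i)_{\leq d}$ for every $i$, which is $\bigcap_i I(X_i)_{\leq d} = \bigl(\bigcap_i I(X_i)\bigr)_{\leq d} = I(X)_{\leq d}$, using $X = \bigcup X_i$.

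Next I would apply rank-nullity. Since $\varphi$ descends to an injection $A_{\leq d}/I(X)_{\leq d} \hookrightarrow \bigoplus_i A_{\leq d}/I(X_i)_{\leq d}$, taking dimensions gives
\[
    \HF(X,d) = \dim A_{\leq d}/I(X)_{\leq d}
        \leq \sum_{i=1}^n \dim A_{\leq d}/I(X_i)_{\leq d}
        = \sum_{i=1}^n \HF(X_i,d),
\]
which is the desired bound.

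The argument is essentially bookkeeping; there is no real obstacle, only the need to verify the identification $\bigcap_i I(X_i)_{\leq d} = I(X)_{\leq d}$, which follows immediately from the definition of $I(\cdot)$ and the fact that a polynomial vanishes on a union if and only if it vanishes on each piece. Note that we do not need the evaluation map to be surjective (and in fact this inequality can be strict), so the Chinese remainder style argument used earlier in the section is not required here.
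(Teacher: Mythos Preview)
Your proof is correct and follows essentially the same approach as the paper: both arguments construct the diagonal restriction map into $\bigoplus_i A_{\leq d}/I(X_i)_{\leq d}$, identify its kernel as $I(X)_{\leq d}$ via $I(X)=\bigcap_i I(X_i)$, and then compare dimensions. The paper even records the same ideal-theoretic generalization (for arbitrary ideals $I_i$ with $I=\bigcap I_i$) that your argument implicitly proves.
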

        \begin{proof}
            A degree $\leq d$ polynomial $P$
                on $X$ is
                a degree $\leq d$ polynomial $P_i$ on each of the $X_i$.
            Now the map $P\mapsto (P_1,\cdots,P_n)$ is injective, since
                if a polynomial is $0$ on every $X_i$, then it is $0$ on
                $X$.
            Counting degrees then gives the above bound.

            The more general way to say this is if $I_1$,
                $\cdots$, $I_n$ are ideals of a ring $A$ and
                $I=\bigcap I_i$ is an ideal, then
            \[ 
                \HF_I(d) \leq \sum_{i=1}^n \HF_{I_i}(d).
            \]
            Indeed, there is a projection 
                $A_\ld/I_\ld \to A_\ld/I_{i\,\ld}$, and if the image of 
                a polynomial $P$ is zero for all $I_i$, then the original
                polynomial must have been in $\bigcap I_i$, i.e. it was
                $0$.
            Thus, the given map is injective, and the conclusion follows.
        \end{proof}

    \section{Monomial Orders}\label{monomial-order}
        In this section, we give preliminaries on monomial orders.
        Our references are chapter 2 section 2 and chapter 9 section 3 of
            \cite{cox} and section 15.2 of \cite{eisenbud}.

        First, if $a=(a_1,\cdots,a_n)\in \N^n$,
            then we let $x^a$ denote the monomial $x_1^{a_1}\cdots x_n^{a_n}$.
        We let $|a|=a_1+\cdots+a_n$, so that the degree of $x^a$ is $|a|$.

        A \textbf{monomial order} is a total order on the monomials of the
            ring $k[x_1,\cdots,x_n]$ satisfying the following two conditions:
        \begin{enumerate}[(i)]
            \item $1\leq x^a$ for all $a\in \N^n$.
            \item If $x^a<x^b$, then $x^ax^c<x^bx^c$ for all $c\in \N^n$.
        \end{enumerate}
        We say a monomial order is \textbf{graded} if it refines the
            partial order on monomials given by degree, that is, if
            $\deg(x^a)<\deg(x^b)$, then $x^a<x^b$.

        \begin{lemma}[Well Ordering]
            Given a monomial order, any nonempty subset $S$ of monomials
                has a least element.
        \end{lemma}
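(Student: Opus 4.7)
The plan is to reduce the well-ordering statement to Dickson's lemma, which asserts that any nonempty collection of monomials in $k[x_1,\ldots,x_n]$ has only finitely many minimal elements under the divisibility partial order, with the property that every monomial in the collection is divisible by at least one of these minimal elements. Granted this, one picks among the finitely many divisibility-minimal elements of $S$ the one which is least under the given monomial order (a finite totally ordered set always has a least element), and then verifies that this choice is in fact least in all of $S$.

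First I would prove Dickson's lemma, by induction on $n$. The base case $n=1$ is immediate because $\N$ is well-ordered under the usual order, so the single minimal element is just the smallest exponent appearing. For the inductive step, one can either invoke the Hilbert basis theorem applied to the monomial ideal generated by $S$ in $k[x_1,\ldots,x_n]$ (a monomial ideal is generated by monomials, and finitely many of them suffice), or give a direct combinatorial argument using the pigeonhole principle on tuples in $\N^n$. Either route produces finitely many monomials $x^{a_1},\ldots,x^{a_r}\in S$ such that every $x^b\in S$ can be written as $x^b = x^{a_i}\cdot x^c$ for some $i$ and some $c\in\N^n$.

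Now let $x^a$ denote whichever of $x^{a_1},\ldots,x^{a_r}$ is least under the given monomial order. For any $x^b\in S$, write $x^b = x^{a_i}\cdot x^c$. Property (i) of a monomial order gives $1\leq x^c$, and property (ii) (applied after noting that equality is preserved trivially while strict inequality is preserved by the axiom) upgrades this to $x^{a_i} = x^{a_i}\cdot 1 \leq x^{a_i}\cdot x^c = x^b$. Combining with $x^a\leq x^{a_i}$, which holds by our choice of $x^a$, we obtain $x^a \leq x^b$. Since $x^b \in S$ was arbitrary, $x^a$ is the least element of $S$.

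The main obstacle is Dickson's lemma itself; once that is in hand, the monomial order axioms enter only in the last paragraph, with property (i) providing the crucial inequality $1\leq x^c$ and property (ii) upgrading it to $x^{a_i}\leq x^b$. An alternative formulation, logically equivalent to well-ordering, is the nonexistence of an infinite strictly decreasing chain of monomials, which follows from Dickson's lemma by the same mechanism: in any such chain, two terms $x^{a_i}$ and $x^{a_j}$ with $i<j$ would eventually satisfy $x^{a_i}\mid x^{a_j}$, forcing $x^{a_i}\leq x^{a_j}$ and contradicting strict descent.
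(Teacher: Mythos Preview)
Your argument is correct and essentially coincides with the paper's. The paper forms the monomial ideal generated by $S$, invokes Noetherianness of $k[x_1,\ldots,x_n]$ to extract finitely many monomial generators, and declares the smallest of these to be the least element of $S$; this is exactly Dickson's lemma in disguise, and the final step (that the smallest generator really is least in $S$) relies on the same axiom check you spell out explicitly. The only cosmetic difference is that you name Dickson's lemma and verify the last step with the order axioms, while the paper appeals directly to the Hilbert basis theorem and leaves that verification implicit.
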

        \begin{proof}
            Let $I$ be the monomial ideal generated by the elements of $S$.
            Since $k[x_1,\cdots,x_n]$ is Noetherian, 
                the ideal $I$ is generated by a finite
                number of elements, which we can take to be monomials
                (since $I$ is a monomial ideal, each term of a polynomial in $I$
                is in $I$).
            The smallest generator will then be the smallest element of $S$.
        \end{proof}

        For a nonzero polynomial $P\in k[x_1,\cdots,x_n]$, its initial term is the
            term of $P$ with the largest monomial under some monomial order $>$.
        If $I$ is an ideal, let $\In(I)$ denote the set of initial terms of
            polynomials in $I$ under the order $>$.
        Note that if $x^\alpha$ is in $\In(I)$, then so is every multiple of
            $x^\alpha$.
        
        \begin{theorem}[Macaulay]\label{macaulay-theorem}
            Let $I$ be an ideal of $k[x_1,\cdots,x_n]$, and $>$ be a monomial
                order.
            Let $S$ be the set of monomials which are not in $\In(I)$.
            Then the set $S$ forms a basis for the ring $k[x_1,\cdots,x_n]/I$.
        \end{theorem}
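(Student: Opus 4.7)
The plan is to establish spanning and linear independence separately. Linear independence is immediate: a nontrivial relation $\sum_i c_i x^{\alpha_i} \in I$ with each $x^{\alpha_i} \in S$ (and not all $c_i$ zero) would be a nonzero polynomial in $I$ whose initial monomial is one of the $x^{\alpha_i}$, contradicting $x^{\alpha_i} \notin \In(I)$.

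For spanning, I want to show every $P \in k[x_1,\cdots,x_n]$ is congruent modulo $I$ to a $k$-linear combination of elements of $S$. Suppose for contradiction that this fails for some $P$, so that every representative $P' \in P + I$ still contains at least one term whose monomial lies in $\In(I)$. For each such $P'$ let $b(P')$ denote the largest such ``bad'' monomial appearing in $P'$, and set $T = \{b(P') : P' \in P + I\}$. By the well-ordering lemma, the nonempty set $T$ has a least element $x^\beta$. Fix $P' \in P + I$ with $b(P') = x^\beta$, let $c$ be the coefficient of $x^\beta$ in $P'$, and pick $Q \in I$ whose initial monomial is $x^\beta$ (rescaling so the initial coefficient of $Q$ is $1$). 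Set $P'' = P' - cQ \in P + I$. The coefficient of $x^\beta$ in $P''$ has been killed, and every remaining term of $P''$ came either from $P'$ (so its monomial is strictly less than $x^\beta$, by maximality of $b(P') = x^\beta$ in $P'$) or from $-cQ$ (so its monomial is strictly less than $x^\beta$, since $x^\beta$ was $Q$'s initial monomial). By the contradiction hypothesis $b(P'')$ must still exist, yet $b(P'') < x^\beta$, contradicting the minimality of $x^\beta$ in $T$.

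The main obstacle is controlling the termination of this reduction: in a given polynomial, subtracting a multiple of an element of $I$ can in principle introduce new bad monomials, so a naive induction on a ``leading bad monomial'' attached to $P$ itself would loop. The well-ordering lemma is precisely what sidesteps this, by letting us pass to the minimum over the entire coset $P + I$ and produce a single monomial $x^\beta$ that a single reduction step must then push strictly below, yielding the contradiction.
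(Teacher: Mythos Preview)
Your argument is correct in outline and close in spirit to the paper's, though organized differently. The paper also runs a well-ordering descent, but it minimizes the \emph{initial monomial} over the set of all polynomials not lying in the $k$-span of $S \cup I$, and then reduces by either a monomial in $S$ or an element of $I$ according to whether that initial monomial lies in $S$ or in $\In(I)$. You instead fix a single offending coset $P + I$ and minimize the \emph{largest bad monomial} of a representative; this pleasantly avoids the case split, since you only ever reduce by elements of $I$.

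One sentence needs repair. You claim that ``every remaining term of $P''$ \ldots\ [has] monomial strictly less than $x^\beta$,'' justifying the $P'$-contribution by ``maximality of $b(P') = x^\beta$ in $P'$.'' That is false as written: $P'$ may well contain terms with monomial $> x^\beta$ --- just not \emph{bad} ones, since $b(P')$ is only the largest monomial of $P'$ lying in $\In(I)$. Those good terms above $x^\beta$ survive unchanged in $P''$ (as $Q$ contributes nothing above $x^\beta$). The conclusion you actually need, $b(P'') < x^\beta$, is still correct: any monomial of $P''$ above $x^\beta$ comes from $P'$ and is therefore good, the $x^\beta$ term has been killed, and everything contributed by $-cQ$ lies strictly below $x^\beta$; hence every bad monomial of $P''$ is $< x^\beta$. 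So the proof stands once you track bad monomials rather than all monomials in that step.
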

        \begin{proof}
            First, the elements of $S$ are linearly independent.
            For if
            \[ 
                P=\alpha_1x^{a_1} + \cdots + \alpha_nx^{a_n} \in I,
            \]
            then the initial term of $P$ must be an initial term of $I$,
                which is a contradiction if the $x^{a_i}$ are in $S$.

            Next, the elements of $S$ span the quotient.
            To show this, consider the span of the elements of $S$ 
                together with the polynomials in $I$ in the ring 
                $k[x_1,\cdots,x_n]$.
            Suppose the set of polynomials not in this span is nonempty.
            Then there is an polynomial $P$ not in the span with the smallest
                initial term.
            If the initial term of $P$ were in $S$, we can subtract a multiple
                of an element of $S$ to get a polynomial not in the span with
                smaller initial term.
            If the initial term of $P$ were not in $S$, we can subtract a
                polynomial in $I$ to get a polynomial not in
                the span with smaller initial term.
            In either case, we contradict our choice of $P$, so
                the elements of $S$ and the polynomials of $I$
                span $k[x_1,\cdots,x_n]$.
        \end{proof}

        \begin{corollary}\label{macaulay-corollary}
            Let $I$ be an ideal of $k[x_1,\cdots,x_n]$, and $>$ be a
                monomial order which is graded.
            Let $S$ be the set of monomials not in $\In(I)$.
            Then $\HF_I(d)$ is equal to the number of monomials of
                $S$ of degree at most $d$.
        \end{corollary}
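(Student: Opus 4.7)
The plan is to show that the set $S_{\leq d} := \{x^a \in S : |a| \leq d\}$ forms a basis for $A_{\leq d}/I_{\leq d}$, so that $\HF_I(d) = \dim A_{\leq d}/I_{\leq d} = |S_{\leq d}|$, which is exactly the count the corollary claims. The strategy is to mimic the argument of Theorem \ref{macaulay-theorem}, but restricted to degree $\leq d$; the entire point of assuming the order is graded is to make sure this restriction is compatible with the reduction steps in that proof.

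The crucial observation I would record at the start is that, because $>$ is graded, the initial term of any nonzero polynomial $P$ has degree equal to $\deg P$ (its largest-degree monomial must be maximal under $>$). In particular, any $Q \in I$ whose initial term has degree $\leq d$ already satisfies $Q \in I_{\leq d}$. This will be what lets me subtract elements of $I$ in the spanning argument without leaving $A_{\leq d}$.

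For linear independence, the natural map $A_{\leq d}/I_{\leq d} \to A/I$ is injective (a polynomial of degree $\leq d$ lies in $I$ iff it lies in $I_{\leq d}$), and Macaulay's theorem already tells us $S$ is linearly independent in $A/I$; so $S_{\leq d} \subset S$ remains linearly independent in $A_{\leq d}/I_{\leq d}$. For the spanning claim, I would repeat the well-ordering argument verbatim: if the collection of $P \in A_{\leq d}$ that are not in the span of $S_{\leq d}$ modulo $I_{\leq d}$ were nonempty, pick one with smallest initial term. If that initial term lies in $S$, it lies in $S_{\leq d}$ (its degree is $\leq \deg P \leq d$) and we subtract a scalar multiple to strictly shrink the initial term; if not, it lies in $\In(I)$, so some $Q \in I$ has the same initial term, and by the graded observation $Q \in I_{\leq d}$, so subtracting a scalar multiple of $Q$ again strictly shrinks the initial term while staying in $A_{\leq d}$. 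Either reduction contradicts minimality.

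The only place where there is anything to worry about is precisely this compatibility: without the graded hypothesis, an element of $I$ with a low-degree initial term could have arbitrarily high total degree, and subtracting it would push us out of $A_{\leq d}$ (or, equivalently, it would fail to witness that its initial term comes from an element of $I_{\leq d}$). So the main — and really the only — conceptual step is to identify that graded-ness is exactly the hypothesis needed to make the Macaulay reduction degree-preserving, after which the corollary is essentially just a bookkeeping restatement of Theorem \ref{macaulay-theorem}.
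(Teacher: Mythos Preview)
Your proposal is correct and follows essentially the same approach as the paper: both isolate the key observation that for a graded monomial order the initial term of a polynomial has the same degree as the polynomial, and then rerun the Macaulay argument of Theorem \ref{macaulay-theorem} restricted to $A_{\leq d}$. The paper's proof is terser (it only sketches the spanning claim and leaves linear independence implicit), but your more detailed write-up is exactly the intended argument.
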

        \begin{proof}
            Let $S_\ld$ be the set of monomials in $S$ of degree
                at most $d$.
            Note that for a graded monomial order $<$, the 
                degree of a polynomial and the degree of its
                initial term are the same.
            Using the same proof as in Theorem \ref{macaulay-theorem},
                we can show that $S_\ld$ spans $k[x_1,\cdots,x_n]_\ld/I_\ld$.
        \end{proof}

        There are many examples of graded monomial orders.
        We describe the
            homogeneous lexicographic order.
        In this order, we first order monomials by degree and then
            break ties by the lexicographic order.
        The lexicographic order on $\N^n$ is given by $(a_1,\cdots,a_n)
            <(b_1,\cdots,b_n)$ if $a_j<b_j$ for the first index $j$
            for which $a_i\neq b_i$.
        Then the homogeneous lexicographic order is the order given by
            $x^a< x^b$ if either $|a|<|b|$ or $|a|=|b|$ and $a<b$.
        It is easy to check that this order is a graded monomial order.

    \section{A Bound via the FKG Inequality}\label{FKG}
        Using graded monomial orders, we can reduce properties about the
            Hilbert function of a subset $Y\subset\F_q^n$
            to questions about a set of monomials.
        For simplicity of notation, we will equivalently work with the lattice $\N^n$:
            the monomials of $k[x_1,\cdots,x_n]$ are in one-to-one 
            correspondence with $\N^n$ via the map
            $x_1^{a_1}\cdots x_n^{a_n}\mapsto (a_1,\cdots,a_n)$.
        This map is actually an isomorphism of lattices.
        The order by divisibility on the monomials
            gets taken to the order $\leq$ on $\N^n$
            given by
        \[ 
            (a_1,\cdots,a_n)\leq (b_1,\cdots,b_n)
            \text{ iff } a_i\leq b_i\text{ for all }i.
        \]
        Moreover, the gcd operation gets taken to
            taking the min of each component, while the lcm operation
            gets taken to taking the max of each component. 
        For simplicity of notation, we will identify the set of monomials
            in $k[x_1,\cdots,x_n]$ with $\N^n$ in this section.

        Now given an ideal $I$ of $k[x_1,\cdots,x_n]$, we see that
            $\In(I)$ satisfies the property that if $a\in \In(I)$
            and $b\geq a$, then $b\in \In(I)$, that is, $\In(I)$ is an
            upper set.
        Similarly, if $S$ is the set of monomials not in $\In(I)$,
            then $S$ satisfies the property that if $a\in S$
            and $b\leq a$, then $b\in S$, that is $\In(I)$ is a lower set.

        Consider the situation where $Y$ is a subset of $\F_q^n$.
        Let $S$ denote the set of monomials not in the initial terms $\In(I(Y))$.
        By Macaulay's theorem (Theorem \ref{macaulay-theorem}), the number of elements
            of $S$ span the functions on $Y$, so $|S|=|Y|$.
        we know that $\In(I(Y))$
            contains $\In(I(\F_q^n))$, so by Lemma \ref{surjection-lemma}, all points of $S$ are contained
            in the hypercube $\{0,1,\cdots,q-1\}^n$.
        It is from this set up that we will show
        \begin{theorem} \label{size-bound}
            Let $Y\subset \F_q^n$.
            Then,
            \[ 
                \HF(\F_q^n,d)|Y| \leq \HF(Y,d) q^n.
            \]
        \end{theorem}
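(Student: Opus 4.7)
The plan is to translate the inequality into a statement about a set of lattice points in the hypercube $\{0,1,\ldots,q-1\}^n$ and then apply the FKG inequality, as suggested by the section title.

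First, I would fix a graded monomial order, for example the homogeneous lexicographic order, and let $S \subset \N^n$ denote the set of monomials not in $\In(I(Y))$. By Macaulay's theorem (Theorem \ref{macaulay-theorem}), $S$ is a basis for $k[x_1,\ldots,x_n]/I(Y)$, so $|S| = |Y|$ (since $Y$ is finite). By Corollary \ref{macaulay-corollary}, $\HF(Y,d)$ equals the number of elements of $S$ of degree at most $d$. Since $I(\F_q^n) \subset I(Y)$, Lemma \ref{surjection-lemma} shows $S \subset \{0,1,\ldots,q-1\}^n$, and it applied to $Y = \F_q^n$ shows that $\HF(\F_q^n,d)$ is the number of points $a \in \{0,\ldots,q-1\}^n$ with $|a| \leq d$. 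Thus the desired inequality becomes
\[
    |\{a \in \{0,\ldots,q-1\}^n : |a| \leq d\}| \cdot |S| \leq |\{a \in S : |a| \leq d\}| \cdot q^n.
\]

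The key observation is that $S$ is a \emph{lower set} in $\{0,\ldots,q-1\}^n$: if $b \in S$ and $a \leq b$ componentwise, then the monomial $x^a$ divides $x^b$, so if $x^a$ were the leading term of some $P \in I(Y)$, then $x^b = x^{b-a}x^a$ would be the leading term of $x^{b-a}P \in I(Y)$, contradicting $b \in S$. The set $\{a : |a| \leq d\}$ is also a lower set. Now I would place the uniform probability measure on the hypercube $C = \{0,\ldots,q-1\}^n$, which is a product measure and hence satisfies the FKG lattice condition on this distributive lattice. Applying the FKG inequality to the indicator functions $\mathbf{1}_S$ and $\mathbf{1}_{\{|a|\leq d\}}$, both of which are monotone decreasing, yields
\[
    \frac{|S \cap \{|a|\leq d\}|}{q^n} \geq \frac{|S|}{q^n}\cdot \frac{|\{|a|\leq d\}|}{q^n},
\]
which is exactly the desired inequality after clearing denominators.

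The main obstacle I anticipate is simply verifying the lower-set property carefully and citing the correct form of the FKG inequality, but since the underlying measure is a product measure on a product of chains, the FKG lattice condition is automatic, and everything should go through cleanly. The real conceptual content has already been done in Sections \ref{hilbert-functions} and \ref{monomial-order}, which reduce the algebraic statement to a purely combinatorial one about monotone subsets of a hypercube.
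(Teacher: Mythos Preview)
Your proposal is correct and is essentially the same argument as the paper's: reduce via Macaulay's theorem and Lemma \ref{surjection-lemma} to a statement about the lower set $S$ inside the hypercube $\{0,\ldots,q-1\}^n$, then apply the FKG inequality to the indicator functions of $S$ and of $\{a:|a|\le d\}$. The only cosmetic difference is that the paper packages the uniform measure on the hypercube as the indicator $\mu=\mathbf 1_T$ on a larger lattice, whereas you work directly on the hypercube; and you explicitly verify the lower-set property of $S$, which the paper leaves implicit.
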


        The proof is an easy application of the FKG inequality.
        We will state the inequality here, referring the reader to 
            Alon and Spencer's book \cite[Chapter 6]{alon-spencer}
            for a proof.

        Let $L$ be a finite distributive lattice.
        We say that a nonnegative function $\mu: L\to \R^+$ is 
            log-supermodular if
        \[  
            \mu(x)\mu(y)\leq \mu(x\vee y)\mu(x\wedge y)
        \]
        for all $x,y$ in $L$.
        \begin{theorem}[FKG inequality]
            Let $L$ be a finite distributive lattice.
            Let $\mu, f, g: L \to R^+$ be nonnegative functions on $L$
                such that $\mu$ is log-supermodular and
                $f, g$ are increasing.
            Then
            \[ 
                \paren{\sum_{x\in L} \mu(x)f(x)}
                \paren{\sum_{x\in L} \mu(x)g(x)}
                \leq
                \paren{\sum_{x\in L}\mu(x)}
                \paren{\sum_{x\in L}\mu(x)f(x)g(x)}.
            \]
        \end{theorem}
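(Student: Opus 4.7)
The plan is to translate the inequality into a correlation inequality on the product lattice $H=\{0,1,\ldots,q-1\}^n$ and deduce it from the FKG inequality with uniform measure. Fix a graded monomial order (for instance homogeneous lexicographic) and identify monomials with $\N^n$. Let $S\subset\N^n$ be the complement of $\In(I(Y))$. By Macaulay (Theorem~\ref{macaulay-theorem}) and its graded refinement (Corollary~\ref{macaulay-corollary}), $S$ is a basis for $\F_q[x_1,\ldots,x_n]/I(Y)$, so $|S|=|Y|$, and the number of $a\in S$ with $|a|\le d$ is exactly $\HF(Y,d)$.

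Since $I(\F_q^n)\subset I(Y)$, and the monomials $x_i^q$ lie in $\In(I(\F_q^n))$ by Lemma~\ref{surjection-lemma}, the upper-closed set $\In(I(Y))$ contains every monomial divisible by some $x_i^q$; equivalently $S\subset H$. The same dimension count applied to $\F_q^n$ itself gives $|H|=q^n$ and $|H_{\le d}|=\HF(\F_q^n,d)$, where $H_{\le d}:=\{a\in H:|a|\le d\}$. The theorem thus reduces to the purely combinatorial estimate
\[
  |H_{\le d}|\cdot|S|\;\le\;|H|\cdot|S_{\le d}|,
\]
with $S_{\le d}=S\cap H_{\le d}$.

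This is precisely a positive-correlation statement for two down-closed events on the distributive lattice $H$ (a product of chains, hence distributive): $S$ is a lower set in $\N^n$ because $\In(I(Y))$ is upper-closed, and $H_{\le d}$ is plainly a lower set. To match the statement of FKG in the paper, which is phrased for increasing functions, I would apply the theorem with $\mu\equiv 1$ (trivially log-supermodular) together with the increasing indicators $f=\Id_{H\setminus S}$ and $g=\Id_{H\setminus H_{\le d}}$. FKG then yields
\[
  |H\setminus S|\cdot|H\setminus H_{\le d}|\;\le\;|H|\cdot\bigl|(H\setminus S)\cap(H\setminus H_{\le d})\bigr|;
\]
expanding the four cardinalities via inclusion-exclusion and cancelling collapses this to the displayed combinatorial bound. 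The only real ``obstacle'' is the bookkeeping in this final expansion, together with verifying distributivity of $H$ and lower-closedness of $S$, both of which are immediate from the setup.
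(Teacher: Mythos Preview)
Your proposal does not prove the stated theorem. The statement you were asked to address is the FKG inequality itself, and the paper does not prove it either: it merely quotes the inequality and refers the reader to Alon and Spencer for a proof. What you have written is instead a proof of Theorem~\ref{size-bound}, the inequality $\HF(\F_q^n,d)\,|Y|\le q^n\,\HF(Y,d)$, which in the paper is deduced \emph{from} FKG immediately after the statement you were given.

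If one reads your proposal as an argument for Theorem~\ref{size-bound}, it is correct and essentially the same as the paper's: both set $L=\{0,\ldots,q-1\}^n$, take $\mu\equiv 1$, and reduce to a correlation inequality between the down-set $S$ of non-initial monomials and the down-set of monomials of degree $\le d$. The only difference is cosmetic. The paper applies FKG directly with $f=\Id_S$ and $g=\Id_{M}$, which are \emph{decreasing} indicators; this uses the (standard, trivially equivalent) variant of FKG for two decreasing functions, glossed over by the paper's phrase ``it is easy to check''. You instead pass to the increasing indicators $\Id_{H\setminus S}$ and $\Id_{H\setminus H_{\le d}}$ so that the hypotheses of the theorem as literally stated are met, and then recover $|S|\,|H_{\le d}|\le |H|\,|S_{\le d}|$ by inclusion--exclusion. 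Your route is a touch more careful about matching the stated hypotheses; otherwise the two arguments coincide.
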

        \begin{proof}[Proof of Theorem \ref{size-bound}]
            Let $S$ be the set of monomials which are not an initial term of $I(Y)$
                and $T=\{0,1,\cdots,q-1\}^n$ be the set of monomials not an initial term of $I(\F_q^n)$.
            Let $M$ denote the set of monomials of degree at most $d$.
            We let $\mu$ be the indicator function for $T$,
                $f$ be the indicator function for $S$ and
                $g$ be the indicator function for $M$.
            It is easy to check that these functions satisfy the conditions of the FKG
                inequality on $\{0,1,\cdots,q-1\}^n$.
            Applying the inequality (and noting that $S\subset T$) then gives
            \[ 
                |S||M\cap T| \leq |T||S\cap M|.
            \]
            Now $|S|=|Y|$, $|M\cap T|= \HF(\F_q^n,d)$, $|T|=q^n$ and
                $|S\cap M|=\HF(Y,d)$.
            Substituting gives us the desired bound
            \[ 
                \HF(\F_q^n,d)|Y| \leq  \HF(Y,d) q^n.
            \]
        \end{proof}

    \section{Finite Degree Closure}\label{zariski-closure}
        In applications of the polynomial method, one often shows a
            statement of the following form:
            every polynomial of degree at most $d$ vanishing on a set
            $Y$ must also vanish on a set $X$.
        We can view statements like this in a slightly different light:
            given a set $Y$ define the \textbf{degree $d$ closure} of
            $Y$, denoted $\cl_d(Y)$ to be the set of all points $x$ such
            that every polynomial of degree
            at most $d$ vanishing on all of $Y$ vanishes must vanish at $x$
            also.
        Equivalently, we can define $\cl_d(Y)$ to be $V(I(Y)_\ld)$, where
            $V$ of a set of polynomials is the set of points in $k^n$
            which vanish on all those polynomials.
        Then the statement at the beginning of this paragraph is   
            equivalent to the statement that $X\subset \cl_d(Y)$.
        As another example, we can view a large part of Dvir's argument \cite{dvir}
            proving the finite field Kakeya conjecture as an 
            argument establishing that
            the degree $q-1$ closure of a Kakeya set is the whole space
            $\F_q^n$.
        The main result of this section is a bound on the size of
            $\cl_d(Y)$ when working over finite fields.

        The reader will note the similarity of the degree $d$ closure with
            the definition of the Zariski closure.
        The full Zariski closure is too fine for our purposes: the Zariski
            closure of a finite point set is just the finite point set.
        However, if we only allow low degree polynomials, we may be able
            to get additional structures such as lines, 
            which were absent before.

        As a warning, the degree $d$ closure $\cl_d$ is 
            a closure operator, but it does not determine a topology.
        That is, the collection of sets 
        \[
            \mathcal C = \{X \in k^n \,| \, X = \cl_d(X)\},
        \]
        is not closed under finite unions, so it is not the collection of
            closed sets in some topology.

        \begin{proposition}
            The degree $d$ closure is a closure operator.
            That is,
            \begin{enumerate}
                \item $X\subset \cl_d(X)$,
                \item If $X\subset Y$ then $\cl_d(X)\subset \cl_d(Y)$,
                \item $\cl_d(\cl_d(X))=\cl_d(X)$.
            \end{enumerate}
        \end{proposition}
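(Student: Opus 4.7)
The plan is to prove the three properties in order, leaning on the reformulation $\cl_d(X) = V(I(X)_{\leq d})$ and on the order-reversing behavior of the operators $I(\cdot)$ and $V(\cdot)$.

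First I would dispatch (1) directly from the definition: a point $x \in X$ automatically satisfies the condition that every polynomial of degree at most $d$ vanishing on $X$ vanishes at $x$, because one of those polynomials is being evaluated at a point of its vanishing set. Hence $X \subset \cl_d(X)$.

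For (2), I would observe that if $X \subset Y$, then every polynomial vanishing on $Y$ also vanishes on $X$, so $I(Y) \subset I(X)$ and in particular $I(Y)_{\leq d} \subset I(X)_{\leq d}$. Taking the common vanishing locus reverses inclusions, giving $\cl_d(X) = V(I(X)_{\leq d}) \subset V(I(Y)_{\leq d}) = \cl_d(Y)$.

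For (3), one inclusion $\cl_d(X) \subset \cl_d(\cl_d(X))$ is just (1) applied to $\cl_d(X)$. The reverse inclusion is the only part with any content, and I would establish it by showing $I(X)_{\leq d} = I(\cl_d(X))_{\leq d}$. The inclusion $I(\cl_d(X))_{\leq d} \subset I(X)_{\leq d}$ follows from (1) by the same order-reversing argument as in (2). For the opposite inclusion, any degree $\leq d$ polynomial $P \in I(X)$ vanishes at every point of $\cl_d(X)$ by the very definition of $\cl_d(X)$, so $P \in I(\cl_d(X))_{\leq d}$. Applying $V$ to the equality $I(X)_{\leq d} = I(\cl_d(X))_{\leq d}$ yields $\cl_d(X) = \cl_d(\cl_d(X))$.

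No step poses a real obstacle; the only subtlety is recognizing that idempotence is not a statement about $I$ and $V$ being mutually inverse in general (they are not), but only about the truncated ideal $I(\cdot)_{\leq d}$ being unchanged when we pass from $X$ to $\cl_d(X)$, which is tautological from the definition of $\cl_d$.
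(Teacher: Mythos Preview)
Your proof is correct and follows essentially the same approach as the paper's. The only cosmetic difference is that you phrase the argument via the operators $V$ and $I$ and the identity $\cl_d(X)=V(I(X)_{\leq d})$, whereas the paper unwinds the definitions and speaks directly of ``every polynomial of degree at most $d$ vanishing on $\ldots$''; in particular, for (3) the paper only needs the inclusion $I(X)_{\leq d}\subset I(\cl_d(X))_{\leq d}$ to get $\cl_d(\cl_d(X))\subset\cl_d(X)$, while you establish the full equality $I(X)_{\leq d}=I(\cl_d(X))_{\leq d}$ and apply $V$---both are fine and amount to the same observation.
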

        \begin{proof}
                For (1), every polynomial of degree at most $d$ 
                    vanishing on $X$
                    vanishes on $X$.

                For (2), every degree at most $d$ 
                    polynomial vanishing on $Y$ vanishes on $X$.
                Every degree at most $d$ 
                        polynomial vanishing on $X$ vanishes on
                        $\cl_d(X)$.
                Thus, every degree at most $d$ polynomial vanishing on
                        $Y$ vanishes on $\cl_d(X)$.

                For (3), by (1) and (2), it suffices to show that 
                    $\cl_d(\cl_d(X))\subset \cl_d(X)$.
                Every degree at most $d$ polynomial vanishing on $X$
                    vanishes on $\cl_d(X)$.
                Every degree at most $d$ polynomial vanishing on $\cl_d(X)$
                    vanishes on $\cl_d(\cl_d(X))$.
                Thus, every degree at most $d$ polynomial vanishing on
                    $X$ vanishes on $\cl_d(\cl_d(X))$.
        \end{proof}

        We can use the fact that $X\subset \cl_d(Y)$ to get information
            on the Hilbert functions of $X$ and $Y$.

        \begin{proposition}\label{Hilbert-degree-bound}
            We have the following:
            \begin{enumerate}
                \item If $X\subset \cl_d(Y)$, then $I(X)_\ld
                    \supset I(Y)_\ld$.
                \item If $X\subset \cl_d(Y)$, 
                    then $\HF(X,m)\leq \HF(Y,m)$ for all $m\leq d$.
                \item If $Y\subset X\subset \cl_d(Y)$, 
                    then $I(X)_\ld=I(Y)_\ld$.
                \item If $Y\subset X\subset \cl_d(Y)$, 
                    then $\HF(X,m)=\HF(Y,m)$ for 
                    all $m\leq d$.
            \end{enumerate}
        \end{proposition}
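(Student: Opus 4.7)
The plan is to observe that all four parts follow from the definition of $\cl_d$ together with the elementary fact that for a containment $Y \subset Z$ of subsets of $k^n$, one has $I(Z) \subset I(Y)$. None of the parts should require an idea beyond chasing the definitions; I anticipate no serious obstacle.

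For part (1), I would just unfold the definitions. Take any $P \in I(Y)_{\leq d}$. By definition $P$ is a polynomial of degree at most $d$ vanishing on $Y$, so by the very definition of $\cl_d(Y)$ it vanishes at every point of $\cl_d(Y)$, and in particular on $X \subset \cl_d(Y)$. Hence $P \in I(X)_{\leq d}$.

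For part (2), I would first note that $\cl_d(Y) \subset \cl_m(Y)$ when $m \leq d$, since any polynomial of degree at most $m$ is in particular of degree at most $d$ and so must vanish on $\cl_d(Y)$. Therefore $X \subset \cl_d(Y) \subset \cl_m(Y)$, so applying part (1) with $m$ in place of $d$ gives $I(X)_{\leq m} \supset I(Y)_{\leq m}$. Using the formula $\HF(Z,m) = \dim A_{\leq m} - \dim I(Z)_{\leq m}$ then yields $\HF(X,m) \leq \HF(Y,m)$.

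For parts (3) and (4), I would combine the earlier parts with the reverse inclusions coming from $Y \subset X$. Namely, $Y \subset X$ implies $I(X) \subset I(Y)$, so $I(X)_{\leq d} \subset I(Y)_{\leq d}$; together with part (1) this forces equality, proving (3). Similarly, $I(X)_{\leq m} \subset I(Y)_{\leq m}$ gives $\HF(X,m) \geq \HF(Y,m)$, which combined with (2) yields the equality in (4). (Alternatively one could deduce (4) directly from (3) by taking dimensions, after intersecting with $A_{\leq m}$.)
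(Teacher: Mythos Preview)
Your proposal is correct and follows essentially the same approach as the paper: all four parts are obtained by unwinding the definition of $\cl_d$ together with the inclusion-reversing property of $I(\cdot)$. The only cosmetic difference is that for (2) you first observe $\cl_d(Y)\subset\cl_m(Y)$ and then apply (1) at level $m$, whereas the paper simply intersects the containment $I(X)_{\leq d}\supset I(Y)_{\leq d}$ from (1) with $A_{\leq m}$; and for (3) you invoke $Y\subset X\Rightarrow I(X)\subset I(Y)$ directly while the paper phrases the same step as applying (1) to $Y\subset\cl_d(X)$.
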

        \begin{proof}
            For (1), this is exactly the assertion that the degree $\leq d$
                polynomials vanishing on $Y$ vanish on $X$.
            (2) then follows from (1).

            For (3), we apply (1) twice.
            First, $X\subset \cl_d(Y)$, 
                so $I(X)_\ld\supset
                I(Y)_\ld.$
            Moreover, we have $Y\subset X\subset\cl_d(X)$, so 
                $I(Y)_\ld\supset I(X)_\ld$.
            Thus, $I(X)_\ld = I(Y)_\ld$.
            (4) then follows from (3).
        \end{proof}

        Saying the above in words may be illuminating.
        We can interpret $\HF(X,d)$ as the
            dimension of the degree at most $d$ polynomials on $X$.
        If $X\subset \cl_d(Y)$, two different degree at most $d$ 
            polynomials $f$, $g$ on $X$ must also
            be different on $Y$, for otherwise $f-g=0$ on $Y$, so $f-g=0$
            on $X$, a contradiction.
        Thus, we have that $\HF(X,d)\leq \HF(Y,d)$.

        One of the things studied in the polynomial method is this: given
            a set $X$, what is the minimal degree of 
            the nonzero polynomials vanishing on $X$.
        When working over infinite fields, this is the same as asking:
            what is the largest degree $d$ such that $\cl_d(X)$ is the
            whole space $k^n$.
        The estimate of the fact that for finite sets $|X|$,
            this degree is at most $n|X|^\frac 1n$ can be restated:

        \begin{proposition}\label{degree-bound}
            Let $X$ be a finite set in $k^n$ where $k$ is an infinite 
                field.
            If $\cl_d(X)=k^n$, then
            \[
                \binom{d+n}{n} \leq |X|.
            \]
            In particular, if $d> n|X|^{\frac 1n}$, 
                then $\cl_d(X)\neq k^n$.
        \end{proposition}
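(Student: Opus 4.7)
The plan is to unwind the hypothesis $\cl_d(X)=k^n$ into a statement about the evaluation map from degree $\leq d$ polynomials to functions on $X$, and then compare dimensions.

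First, observe that over an infinite field $k$ the only polynomial vanishing on all of $k^n$ is the zero polynomial (a standard induction on $n$, using that a one-variable polynomial with infinitely many roots is zero). Combined with the definition of degree $d$ closure, the assumption $\cl_d(X)=k^n$ means that any polynomial of degree at most $d$ vanishing on $X$ also vanishes on $k^n$, and is therefore $0$. In other words, $I(X)_\ld = 0$.

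Now consider the evaluation map $\mathrm{ev}\colon k[x_1,\dots,x_n]_\ld \to k^{|X|}$ sending $P$ to $(P(x))_{x\in X}$. This map is linear and its kernel is precisely $I(X)_\ld$, which we just showed is zero. Hence $\mathrm{ev}$ is injective, and comparing dimensions gives
\[
\binom{d+n}{n} = \dim k[x_1,\dots,x_n]_\ld \leq \dim k^{|X|} = |X|,
\]
which is the first inequality. The dimension count on the left is the standard fact that the number of monomials of degree at most $d$ in $n$ variables is $\binom{d+n}{n}$.

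For the ``in particular'' statement, I would contrapose: assuming $\cl_d(X)=k^n$ and $d>n|X|^{1/n}$, derive a contradiction with the first part via the estimate
\[
\binom{d+n}{n} = \prod_{i=1}^{n}\frac{d+i}{i} \geq \left(\frac{d}{n}\right)^n > |X|.
\]
There is no real obstacle here; the entire content of the proposition is the translation between the closure condition and injectivity of evaluation, plus the elementary lemma that over infinite fields only the zero polynomial vanishes identically. The latter is the only place where the hypothesis that $k$ is infinite is used, and it is essential, because over $\F_q$ the polynomial $x_1^q - x_1$ shows this step fails.
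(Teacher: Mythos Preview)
Your proof is correct and is essentially the paper's argument unwound. The paper phrases it in the Hilbert function language developed earlier, writing $\binom{d+n}{n}=\HF(k^n,d)=\HF(X,d)\leq |X|$ and invoking Proposition~\ref{Hilbert-degree-bound}; your observation that $I(X)_{\leq d}=0$ and that the evaluation map is therefore injective is exactly the content of those equalities, and your inequality $\binom{d+n}{n}\geq (d/n)^n$ is the same estimate the paper uses for the ``in particular'' clause.
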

        \begin{proof}
            If $\cl_d(X)=k^n$, then
            \[
                \binom{d+n}{n} = \HF(k^n,d)
                = \HF(X,d) \leq |X|.
            \]
            Using the fact that $\frac{d^n}{n^n}\leq \binom{d+n}{n}$,
                we get $d\leq n |X|^\frac1n$, as desired.
        \end{proof}

        Indeed, we could also do this for $X$ a set of $L$ lines.

        \begin{proposition}
            Let $X$ be the union of $L$ lines in $k^n$
                where $k$ is infinite.
            If $\cl_d(X)=k^n$, then
            \[ 
                \frac 1{d+1}\binom{d+n}{n} \leq L.
            \]
            In particular, if $d>nL^\frac{1}{n-1}$, 
                then $\cl_d(X)\neq k^n$.
        \end{proposition}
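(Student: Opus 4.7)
The plan is to combine three ingredients: $\HF(k^n,d) = \binom{d+n}{n}$ (since $k$ is infinite, $I(k^n) = 0$); the comparison $\HF(k^n,d) \leq \HF(X,d)$ from Proposition \ref{Hilbert-degree-bound}(2), applied with $k^n \subset \cl_d(X)$; and an upper bound $\HF(X,d) \leq L(d+1)$ coming from Lemma \ref{union-lemma} together with a single-line estimate.

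The key new input is $\HF(L_i,d) \leq d+1$ for each line $L_i$. To prove it, I would parameterize $L_i$ as $\{p + tv : t \in k\}$ and observe that the substitution $f \mapsto f(p+tv)$ sends $A_{\leq d}$ into $k[t]_{\leq d}$. Since $k$ is infinite, a univariate polynomial vanishing on all of $k$ is zero, so the kernel of the substitution is exactly $I(L_i)_{\leq d}$. Thus $A_{\leq d}/I(L_i)_{\leq d}$ embeds into $k[t]_{\leq d}$, which has dimension $d+1$. Writing $X = L_1 \cup \cdots \cup L_L$ and summing over $i$ via Lemma \ref{union-lemma} yields $\HF(X,d) \leq L(d+1)$, and chaining with the other two ingredients gives the main inequality $\binom{d+n}{n} \leq L(d+1)$, i.e.\ $\frac{1}{d+1}\binom{d+n}{n} \leq L$.

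The ``in particular'' clause follows by contrapositive. Using $\binom{d+n}{n} \geq (d+1)^n/n!$ (since each factor $d+i \geq d+1$) together with the elementary estimate $n! \leq n^{n-1}$ for $n \geq 2$ (easy induction), we get
\[
    \frac{1}{d+1}\binom{d+n}{n} \geq \frac{(d+1)^{n-1}}{n!} \geq \paren{\frac{d+1}{n}}^{n-1}.
\]
If $d > nL^{1/(n-1)}$, then $(d+1)/n > L^{1/(n-1)}$, so the right-hand side exceeds $L$, contradicting the bound just established, so $\cl_d(X) \neq k^n$. I expect the only genuinely new step is the line Hilbert-function bound; the rest is routine bookkeeping with product and binomial estimates.
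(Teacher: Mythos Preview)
Your proof is correct and follows essentially the same approach as the paper: compute $\HF(\ell,d)$ for a line, apply Lemma~\ref{union-lemma}, and combine with Proposition~\ref{Hilbert-degree-bound} and $\HF(k^n,d)=\binom{d+n}{n}$. You supply more detail than the paper does (the paper simply asserts $\HF(\ell,d)=d+1$ and says the ``in particular'' follows ``similarly to the previous proposition''), and your binomial estimate $\binom{d+n}{n}\geq (d+1)^n/n!$ together with $n!\leq n^{n-1}$ is a minor but equally valid variant of the paper's $\binom{d+n}{n}\geq d^n/n^n$ style estimate.
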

        \begin{proof}
            Note that for a line $\ell$, we have that $\HF(\ell,d)=d+1$.
            Thus, using Lemma \ref{union-lemma}, we get that $\HF(X,d)
            \leq (d+1)|L|$.
            The conclusion follows similarly to the previous proposition.
        \end{proof}

        We remark that slight care must be taken in finite fields
            due to the fact that there exist nonzero polynomials which
            vanish on the whole space $\F_q^n$

        We also have propositions of the following form:

        \begin{proposition}
            Let $E_1$, $\cdots$, $E_n$ be subsets of $k$ such that
                $|E_i|>d$ for all $i$.
            Then the degree $d$ closure of $E_1\times \cdots\times E_n$
                is $k^n$.
        \end{proposition}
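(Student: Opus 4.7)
The plan is to show directly that any polynomial $P\in k[x_1,\dots,x_n]$ of degree at most $d$ vanishing on $E_1\times\cdots\times E_n$ must be the zero polynomial; this immediately gives $I(E_1\times\cdots\times E_n)_\ld=\{0\}$, so $\cl_d(E_1\times\cdots\times E_n)=V(\{0\})=k^n$. The argument is a standard induction on $n$ in the spirit of the one-variable Schwartz--Zippel-type fact that a nonzero single-variable polynomial of degree at most $d$ has at most $d$ roots.

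For the base case $n=1$, a polynomial $P\in k[x_1]$ of degree at most $d$ vanishing on the set $E_1$ of size greater than $d$ must be identically zero.

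For the inductive step, I would write
\[
    P(x_1,\dots,x_n)=\sum_{i=0}^{d} P_i(x_2,\dots,x_n)\, x_1^i,
\]
where each $P_i\in k[x_2,\dots,x_n]$ has degree at most $d-i\leq d$. For each fixed tuple $(a_2,\dots,a_n)\in E_2\times\cdots\times E_n$, the univariate polynomial $P(x_1,a_2,\dots,a_n)$ has degree at most $d$ in $x_1$ and vanishes on $E_1$, so by the base case it is identically zero. Hence every coefficient $P_i(a_2,\dots,a_n)$ equals $0$ for all $(a_2,\dots,a_n)\in E_2\times\cdots\times E_n$. Applying the inductive hypothesis to each $P_i$ (whose degree is at most $d$, and the sets $E_2,\dots,E_n$ all satisfy $|E_j|>d$) shows that each $P_i$ is the zero polynomial, and therefore $P=0$.

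There is no real obstacle here; the only thing to be slightly careful about is that when we restrict $P$ to a ``vertical slice'' by fixing $(a_2,\dots,a_n)$, the resulting univariate polynomial really does have degree at most $d$ in $x_1$, which is immediate from the decomposition above. Once $P\equiv 0$ is established, the conclusion $\cl_d(E_1\times\cdots\times E_n)=k^n$ follows from the definition of degree $d$ closure.
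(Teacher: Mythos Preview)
Your proof is correct and is essentially the same argument the paper invokes: the paper simply cites this fact as a weak form of the Combinatorial Nullstellensatz (specifically Lemma~2.1 of Alon--Tarsi and of Alon), whose proof is precisely the induction on $n$ you wrote out. So you have reproduced the cited lemma's proof rather than citing it, which is perfectly fine and more self-contained.
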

        \begin{proof}
            This is a weak form of the Combinatorial Nullstellensatz.
            It is an easy consquence of a lemma in Alon's and Tarsi's paper \cite[Lemma 2.1]{alon-tarsi}
                and in Alon's paper on the Nullstellensatz \cite[Lemma 2.1]{alon}.
        \end{proof}
       
        As promised, we will give a bound on $\cl_d(Y)$ for finite fields.
        
        \begin{theorem} \label{finite-field-closure-bound}
            Let $Y$ be a subset of $\F_q^n$.
            Then
            \[ 
                \HF(\F_q^n,d)|\cl_d(Y)| \leq q^n |Y|.
            \]
        \end{theorem}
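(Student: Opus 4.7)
The plan is to combine Theorem \ref{size-bound} applied to the closure with the equality of Hilbert functions from Proposition \ref{Hilbert-degree-bound}. Specifically, since $\cl_d(Y)$ is itself a subset of $\F_q^n$, I would first apply Theorem \ref{size-bound} to $\cl_d(Y)$ in place of $Y$, yielding
\[
    \HF(\F_q^n,d)\,|\cl_d(Y)| \leq \HF(\cl_d(Y),d)\, q^n.
\]
This already has the right left-hand side and the right power of $q^n$, so it remains to show $\HF(\cl_d(Y),d) \leq |Y|$.

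Next, I would invoke Proposition \ref{Hilbert-degree-bound}(4) with $X = \cl_d(Y)$. The hypothesis $Y \subset X \subset \cl_d(Y)$ is immediate (the middle inclusion uses part (1) of the closure-operator proposition applied to $Y$, or is just an equality). This yields
\[
    \HF(\cl_d(Y),d) = \HF(Y,d).
\]

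Finally, I would bound $\HF(Y,d) \leq |Y|$ by the standard observation that the evaluation map $\F_q[x_1,\dots,x_n]_{\leq d} \to \F_q^{|Y|}$ sending a polynomial to its tuple of values on $Y$ has image of dimension at most $|Y|$, and its kernel is exactly $I(Y)_{\leq d}$, so $\HF(Y,d) = \dim A_{\leq d}/I(Y)_{\leq d} \leq |Y|$. Chaining the three inequalities gives the desired bound.

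I do not anticipate any real obstacle here: all the substantive work has already been done in Theorem \ref{size-bound} (the FKG step) and Proposition \ref{Hilbert-degree-bound} (the passage to the closure preserves the Hilbert function in low degrees). The only thing worth double-checking is that the chain $Y \subset \cl_d(Y) \subset \cl_d(Y)$ legitimately fits the hypothesis of Proposition \ref{Hilbert-degree-bound}(4), which it does trivially, so the proof reduces to a three-line computation.
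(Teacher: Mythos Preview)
Your proposal is correct and matches the paper's proof essentially line for line: the paper also applies Theorem \ref{size-bound} to $\cl_d(Y)$, then uses Proposition \ref{Hilbert-degree-bound} to replace $\HF(\cl_d(Y),d)$ by $\HF(Y,d)$, and finally bounds $\HF(Y,d)\leq |Y|$. The only cosmetic difference is that the paper compresses the three steps into a single displayed chain of inequalities.
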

        
        \begin{proof}
            We apply Theorem \ref{size-bound}, Proposition \ref{Hilbert-degree-bound}, and
                the fact that $\HF(Y,d)\leq |Y|$:
            \[ 
                \HF(\F_q^n,d)|\cl_d(Y)|
                \leq \HF(\cl_d(Y),d)q^n 
                = \HF(Y,d)q^n
                \leq q^n |Y|.
            \]
        \end{proof}

        We remark that we did not use anything special about the field $\F_q$ 
            in proving our bound,
            merely the fact that $I(\F_q^n)=(x_1^q-x_1,\cdots,x_n^q-x_n)$.
        Indeed, our proof carries over entirely to finite subsets $E$ of
            $k^n$ where the complement of $\In(I(E))$ in the monomials of
            $k[x_1,\cdots,x_n]$ is a box.
        For example, for an arbitrary field $k$, let $E_1,\cdots,E_n$ be finite
            subsets of $k$.
        Let $E=E_1\times \cdots \times E_n\subset k^n$.
        For any subset $Y$ of $E$, we can get the bound
        \[ 
            \HF(E,d) |\cl_d(Y)| \leq |E||Y|.
        \]
        Indeed, we need to show that the complement of $\In(I(E))$ in the
            monomials (viewed as a lattice $\N^n$) is the set 
        \[ 
            B=\{0,\cdots,|E_1|-1\}\times \cdots \times \{0,\cdots,|E_n|-1\},
        \]
        which can be done similarly to $\F_q^n$: by noting that all the other monomials
            are clearly an initial term of $I(E)$ and then counting to conclude that
            there are no more.
        The only other change is to apply the FKG inequality with the set $B$ instead
            of the set $\{0,\cdots,q-1\}^n$.
        We leave the details to the reader.

        It is perhaps interesting to study the degree $d$ closure in a more general
            setting.
        In line with our viewing the ring $\F_q[x_1,\cdots,x_n]$ as the polynomial
            functions on $\F_q^n=\Spec \F_q[x_1,\cdots,x_n]/(x_1^q-x_1,\cdots, x_q^n-x_n)$,
            the general framework is probably a ring homomorphism $R=\oplus R_i\to S$
            from a graded ring $R=\oplus R_i$ to a ring $S$.
        Here $R=\oplus R_i$ is viewed as the `polynomial functions' on $\Spec S$.
        We might also want to work with Hilbert functions instead of affine Hilbert
            functions as we have done in this paper:
            to get our theory with Hilbert functions we would precompose with the
            map $k[x_0,x_1,\cdots,x_n]\to k[x_1,\cdots,x_n]$ sending $x_0$ to $1$.

        To illustrate possible uses of our theorem, we 
            give a couple of corollaries:

        \begin{corollary}[Our Summer Research Problem]
            Let $L_1$, $\cdots$, $L_c$ be lines in $\F_q^n$,
                and $X = \bigcup L_i$.
            On each $L_i$, pick a subset $\gamma_i$ such that
                $|\gamma_i|\geq \frac q2$, and let $Y= \bigcup \gamma_i$.
            Then
            \[ 
                |Y| \geq \frac{1}{n!\cdot 2^n} |X|.
            \]
        \end{corollary}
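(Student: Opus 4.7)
The plan is to show $X \subset \cl_d(Y)$ for an appropriate choice of $d$ just under $q/2$, and then apply Theorem \ref{finite-field-closure-bound} directly, using that $\HF(\F_q^n, d) = \binom{d+n}{n}$ for $d \le q-1$.

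First I would fix $d = \lceil q/2 \rceil - 1$, so that $d < q/2 \leq |\gamma_i|$ for every $i$. The key observation is the standard one-variable fact: the restriction of a polynomial of degree at most $d$ on $\F_q^n$ to the line $L_i$ is a polynomial of degree at most $d$ in one variable, which has at most $d$ roots; hence if such a polynomial vanishes on $\gamma_i$ (which has strictly more than $d$ points), it must vanish on all of $L_i$. Therefore $L_i \subset \cl_d(\gamma_i) \subset \cl_d(Y)$ for each $i$, so $X = \bigcup L_i \subset \cl_d(Y)$.

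Next I would apply Theorem \ref{finite-field-closure-bound} to $Y$, giving
\[
    \HF(\F_q^n, d)\, |X| \leq \HF(\F_q^n, d)\, |\cl_d(Y)| \leq q^n |Y|.
\]
Since $d < q$, Lemma \ref{surjection-lemma} shows that no monomial of degree at most $d$ lies in $\In(I(\F_q^n))$, so by Corollary \ref{macaulay-corollary} we have $\HF(\F_q^n, d) = \binom{d+n}{n}$. To conclude, I would estimate
\[
    \binom{d+n}{n} = \frac{(d+1)(d+2)\cdots(d+n)}{n!} \geq \frac{(d+1)^n}{n!} \geq \frac{(q/2)^n}{n!},
\]
using $d+1 \geq q/2$, which holds in both the even and odd $q$ cases by the choice of $d$. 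Substituting this into the previous inequality yields $|X| \leq n! \cdot 2^n |Y|$, which is the claim.

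There is no real obstacle here; the entire argument is a direct application of the main theorem together with the one-variable root bound. The only mild subtlety is verifying that $d+1 \geq q/2$ can be achieved while keeping $d < |\gamma_i|$, which is immediate from $|\gamma_i| \geq \lceil q/2 \rceil$ and the choice $d = \lceil q/2\rceil - 1$.
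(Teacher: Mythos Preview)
Your proof is correct and follows essentially the same approach as the paper: you choose $d=\lceil q/2\rceil-1=\lfloor (q-1)/2\rfloor$, observe that $X\subset\cl_d(Y)$ via the one-variable root bound, apply Theorem~\ref{finite-field-closure-bound}, and use $\HF(\F_q^n,d)=\binom{d+n}{n}\geq(d+1)^n/n!\geq(q/2)^n/n!$. The paper's proof is just a terser version of exactly this argument.
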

        \begin{proof}
            The degree $\lfloor \frac{q-1}{2}\rfloor<\frac q2$ 
                closure of $Y$
                contains $X$.
            Note that for $d<q$, we have $\HF(\F_q^n,d)=\binom{d+n}{n}\geq \frac{(d+1)^n}{n!}$.
            An application of Theorem \ref{finite-field-closure-bound} then immediately
                gives the result.
        \end{proof}

        \begin{corollary}[Finite field Nikodym Conjecture]
            A Nikodym set $N$ in $\F_q^n$ is a set such that for any point
                $x$, there is a line through $x$ whose intersection with
                $N$ has at least $\frac q2$ elements.
            For any Nikodym set $N$, we must have
            \[ 
                |N| \geq \frac{q^n}{n!\cdot 2^n}.
            \]
        \end{corollary}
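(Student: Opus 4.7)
The plan is to mirror the proof of the previous corollary (our summer research problem), but using the Nikodym hypothesis to fill in the role of the lines $L_i$ on the fly. Set $d = \lfloor (q-1)/2 \rfloor$, so that $d < q/2$ and $d+1 \geq q/2$.

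First I would show that $\cl_d(N) = \F_q^n$. Fix any $x \in \F_q^n$. By the Nikodym hypothesis, there is a line $L_x$ through $x$ with $|L_x \cap N| \geq q/2 > d$, so $|L_x \cap N| \geq d+1$. Any polynomial $P$ of degree at most $d$ vanishing on $N$ therefore vanishes on at least $d+1$ points of the line $L_x$. Restricting $P$ to $L_x$ gives a univariate polynomial of degree at most $d$ with more than $d$ roots, hence identically zero on $L_x$; in particular $P(x) = 0$. Since $x$ was arbitrary, $\cl_d(N) = \F_q^n$.

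Next I would apply Theorem \ref{finite-field-closure-bound} to $Y = N$, which yields
\[
    \HF(\F_q^n, d) \cdot q^n = \HF(\F_q^n, d) \cdot |\cl_d(N)| \leq q^n |N|,
\]
so $|N| \geq \HF(\F_q^n, d)$. Since $d < q$, we have $\HF(\F_q^n, d) = \binom{d+n}{n} \geq \frac{(d+1)^n}{n!} \geq \frac{(q/2)^n}{n!} = \frac{q^n}{n! \cdot 2^n}$, which gives the desired lower bound on $|N|$.

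There is no real obstacle here; the only subtlety is checking that $d+1 \geq q/2$ holds for the floor $d = \lfloor (q-1)/2 \rfloor$ (it does, regardless of the parity of $q$), so that the estimate $\binom{d+n}{n} \geq (q/2)^n / n!$ goes through. Everything else is a direct application of Theorem \ref{finite-field-closure-bound} together with the elementary Schwartz--Zippel observation on a single line.
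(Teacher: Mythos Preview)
Your proof is correct and is essentially the same argument as the paper's. The paper simply cites the previous corollary with $X=\F_q^n$ and $Y=N$, whereas you have unpacked that corollary directly (showing $\cl_d(N)=\F_q^n$ via the Nikodym hypothesis and then applying Theorem~\ref{finite-field-closure-bound} with the estimate $\binom{d+n}{n}\geq (d+1)^n/n!\geq (q/2)^n/n!$); the content is identical.
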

        \begin{proof}
            We can take $X=\F_q^n$ and $Y=N$ in the previous corollary.
        \end{proof}

        The theorem can also be used to prove variants:
        \begin{corollary}
            Let $L_1$, $\cdots$, $L_c$ be lines in $\F_q^n$,
                and $X = \bigcup L_i$.
            On each $L_i$, pick a subset $\gamma_i$ such that
                $|\gamma_i|> q^\alpha$
                for some $0<\alpha<1$
                and let $Y= \bigcup \gamma_i$.
            Then
            \[ 
                |X| \leq n!\cdot q^{n(1-\alpha)} |Y|.
            \]
        \end{corollary}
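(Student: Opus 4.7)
The plan is to mirror the proof of the previous corollary: show that $X \subseteq \cl_d(Y)$ for a suitable $d$ strictly less than $q$, then apply Theorem \ref{finite-field-closure-bound} together with a lower bound on $\HF(\F_q^n,d)$.

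First I would set $d = \lfloor q^\alpha \rfloor$. Since $|\gamma_i| > q^\alpha$ and $|\gamma_i|$ is an integer, we have $|\gamma_i| \geq d+1$. Any polynomial $P$ of degree at most $d$ restricts to a univariate polynomial of degree at most $d$ on the line $L_i$; if $P$ vanishes on $\gamma_i$, it has at least $d+1$ roots on $L_i$ and therefore vanishes identically on $L_i$. Hence $L_i \subseteq \cl_d(Y)$ for every $i$, and so $X \subseteq \cl_d(Y)$.

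Next I would invoke Theorem \ref{finite-field-closure-bound}, giving
\[
    \HF(\F_q^n,d)\,|X| \leq \HF(\F_q^n,d)\,|\cl_d(Y)| \leq q^n |Y|.
\]
Because $\alpha < 1$ we have $d < q$, so by the computation of $\HF(\F_q^n,d)$ from Lemma \ref{surjection-lemma} (no $q$th power relation is active in degree $<q$), $\HF(\F_q^n,d) = \binom{d+n}{n} \geq (d+1)^n/n!$. Since $d+1 > q^\alpha$, this gives $\HF(\F_q^n,d) > q^{\alpha n}/n!$, and substituting yields the claimed bound $|X| \leq n!\cdot q^{n(1-\alpha)}|Y|$.

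There is no real obstacle; the only thing to be careful about is the passage from the strict inequality $|\gamma_i| > q^\alpha$ to the integer bound $|\gamma_i| \geq \lfloor q^\alpha \rfloor + 1$, which is what licenses the use of $d = \lfloor q^\alpha \rfloor$ in the Schwartz–Zippel style argument on each line. Everything else is a direct combination of Theorem \ref{finite-field-closure-bound} with the elementary estimate $\binom{d+n}{n} \geq (d+1)^n/n!$, exactly in parallel with the previous corollary.
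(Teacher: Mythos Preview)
Your proposal is correct and follows essentially the same approach as the paper: set $d=\lfloor q^\alpha\rfloor$, observe that $X\subset\cl_d(Y)$, apply Theorem~\ref{finite-field-closure-bound}, and use $\HF(\F_q^n,d)=\binom{d+n}{n}\geq (d+1)^n/n!>q^{\alpha n}/n!$ for $d<q$. The paper's proof is simply a terser version of exactly these steps.
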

        \begin{proof}
            The set $X$ is contained in the degree $\floor{q^\alpha}$ 
                closure of $Y$.
            Using the fact that for $d<q$, we have $\HF(\F_q^n,d)\geq \frac{(d+1)^n}{n!}$,
                we apply Theorem \ref{finite-field-closure-bound} to get the result.
        \end{proof}

        We remark that the bounds in the corollaries will be improved in the
            next section by the statistical Kakeya for curves.
        Note that in the above corollaries, we've only used the fact
            that the degree $d$ closure of $d+1$ points on a line must contain
            the whole line to show that the degree $d$ closure of some smaller
            set must contain some larger set.
        In this situation, considering vanishing with multiplicities allows
            us to get better bounds.

    \section{Multiplicity}\label{multiplicity}
        In Dvir, Kopparty, Saraf and Sudan's paper \cite{dvir-kopparty-saraf-sudan}, 
            the constant in the finite field
            Kakeya set problem was improved when allowing the polynomials
            to vanish on sets with higher multiplicity.
        We pursue this direction of thought in this section.

        First, we need to recall what vanishing with 
            multiplicity greater than one at a point means.
        A polynomial $P$ vanishes with multiplicity $m$ at a point $p$
            if its Taylor expansion about $p$ has no terms of degree
            less than $m$.
        Equivalently, letting $\p=(x_1-p_1,\cdots,x_n-p_n)$ 
            denote the maximal ideal of functions vanishing at the point 
            $p$, we say $P$ vanishes with multiplicity $m$ at $p$
            if $P\in \p^m$.
        The order of $P$ at the point $p$ is defined to be the largest
            $m$ such that $P\in \p^m$ and is denoted $\ord_p(P)$.
        By convention, when $P=0$, we set $\ord_p(P)=\infty$.
        Note that $\ord_p(PQ)=\ord_p(P)+\ord_p(Q)$.

        The multiplicity of a polynomial at a point can also be phrased
            in terms of Hasse derivatives
        (For a reference on Hasse derivatives, see for 
            example \cite[Section 5.10]{hirschfeld}, or 
            \cite[Section 2]{dvir-kopparty-saraf-sudan}).
        Intuitively, 
            Hasse derivatives are defined so that there is a Taylor
            expansion about every point $p\in k^n$:
        \begin{equation} \label{Taylor-expansion}
            P(x) = \sum_{i_1,\cdots,i_n} D^{i_1,\cdots,i_n}P(p)
                (x_1-p_1)^{i_1}\cdots(x_n-p_n)^{i_n}.
        \end{equation}
        In fields with infinite characteristic, the Hasse derivative
            of order $i_1,\cdots,i_n$ will be $\frac{1}{i_1!\cdots i_n!}$
            the ordinary $i_1,\cdots,i_n$ partial derivative.
        Such a formula does not work in finite characteristic because,
            the number $i_1!\cdots i_n!$ might not be invertible
            (and if it isn't, the $i_1,\cdots,i_n$ partial derivative will
            be zero).
        It is clear that if we have the Taylor
            expansion (\ref{Taylor-expansion}),
            then a polynomial vanishes with multiplicity $m$ 
            at a point $p$ if
            and only if $D^{i_1,\cdots,i_n}P$ vanish at $p$ for all
            $i_1+\cdots+i_n<m$.

        We define
        \[ 
            D^{i_1,\cdots,i_n} P(x_1,\cdots,x_n)
            = [t^{i_1}\cdots t^{i_n}] P(x_1+t_1,\cdots,x_n+t_n).
        \]
        Letting $d$ be the degree of $P$ and $|i| = i_1+\cdots+i_n$, this
            shows that $D^{i_1,\cdots,i_n}P$ is
            a polynomial of degree $d-|i|$.
        For short, if $i=(i_1,\cdots,i_n)$, we let $D^i$ denote
            $D^{i_1,\cdots,i_n}$, and $t^i$ denote 
            $t_1^{i_1}\cdots t_n^{i_n}$.
        From the definition, 
        \begin{equation}\label{Hasse-expansion}
            P(x+t) = 
                \sum_{i\in \N^n} D^{i}P(x)t^i.
        \end{equation}
        Setting $t=x-p$ and $x=p$ in equation (\ref{Hasse-expansion})
            gives the desired equation (\ref{Taylor-expansion}).

        Let $k$ be a field and $X$ a subset of $k^n$.
        We define $I^m(X)$ to be the ideal of all polynomials
                which vanish to order at least $m$ at each point of $X$.
        Note that $I^1(X)$ is just the $I(X)$ which was defined before.
        It is clear that $I^m(X)$ is decreasing in both $m$ and $X$:
            if $m_1\leq m_2$, then $I^{m_1}(X)\supset I^{m_2}(X)$,
            and if $X_1 \subset X_2$, then $I^m(X_1)\supset I^m(X_2)$.

        We define $\HF^m(X,d)$ to be the Hilbert function of $I^m(X)$.
        By the above, we see
            that $\HF^m(X,d)$ is increasing in $m$, $X$ and $d$.

        It is clear from the definitions that if $p$ is a point in $k^n$, then
            $I^m(\{p\})=(I(\{p\}))^m$.
        Let $Y=\{y_1,\cdots,y_s\}$ be a finite subset of $k^n$.
        Consider the evaluation function sending a polynomial $P$ to its values
            and the values of all its Hasse derivatives of order at most $m-1$
            on each of the points of $Y$.
        This image has dimension $\binom{m+n-1}{n}|Y|$: we wish to show it surjects.
        Now $I^m(\{y_1\}),\cdots,I^m(\{y_s\})$ are pairwise coprime
            since $I(\{y_i\})$ is coprime with $I(\{y_j\})$ for $i\neq j$
            (and if $\mathfrak a$ is coprime with $\mathfrak b$ and $\mathfrak c$, 
            then $\mathfrak a$ is coprime with $\mathfrak b\mathfrak c$).
        The Chinese Remainder theorem immediately shows that the polynomial
            functions surject.
        Moreover, it shows that for finite spaces
            $I^m(Y)=I^m(\{y_1\})\cdots I^m(\{y_s\})
                = (I(\{y_1\})\cdots I(\{y_s\}))^m
                =(I(Y))^m$.
        By being careful, we can get a degree bound for when $\HF^m(Y,d)$
            stabilizes:

        \begin{lemma}\label{finite-subset-multiplicity-surjection-lemma}
            Let $Y=\{y_1,\cdots,y_s\}$ be a finite subset of $k^n$.
            Then for all $d\geq 2m|Y|-m-|Y|$,
            \[ 
                \HF^m(Y,d) = \binom{m+n-1}{n}|Y|.
            \]
        \end{lemma}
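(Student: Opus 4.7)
The plan is to show that the evaluation map from the space of degree $\leq d$ polynomials to the quotient $k[x_1,\cdots,x_n]/I^m(Y) = \bigoplus_i k[x_1,\cdots,x_n]/I(\{y_i\})^m$ (the identification coming from the Chinese Remainder argument recorded just above the lemma) is surjective once $d \geq 2m|Y| - m - |Y|$. Since the target has dimension $\binom{m+n-1}{n}|Y|$, surjectivity will immediately give the stated value of $\HF^m(Y,d)$. Concretely, I will exhibit for each $i \in \{1,\cdots,s\}$ and each multi-index $\alpha$ with $|\alpha| < m$ an explicit polynomial $P_{i,\alpha}$ of degree at most $d$ whose image under the evaluation map is $(x - y_i)^\alpha$ in the $i$-th summand and $0$ in every other summand; since the classes of $(x - y_i)^\alpha$ form a basis of $k[x_1,\cdots,x_n]/I(\{y_i\})^m$, this furnishes the surjection.

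Fix $i$. For each $j \neq i$ pick a linear form $\ell_{i,j}$ with $\ell_{i,j}(y_j) = 0$ and $\ell_{i,j}(y_i) = 1$; this is possible because $y_i$ and $y_j$ differ in some coordinate. Then $\ell_{i,j}^m \in I(\{y_j\})^m$, and modulo $I(\{y_i\})^m$ the element $\ell_{i,j}^m$ is a unit (its value at $y_i$ is $1$). Because the quotient $k[x_1,\cdots,x_n]/I(\{y_i\})^m$ has a basis given by monomials in $(x - y_i)$ of degree at most $m - 1$, one can pick a polynomial $u_{i,j}$ of degree at most $m - 1$ such that $\ell_{i,j}^m u_{i,j} \equiv 1 \pmod{I(\{y_i\})^m}$ (concretely, take a truncated geometric series expansion of $\ell_{i,j}^{-m}$ about $y_i$). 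Set $h_{i,j} := \ell_{i,j}^m u_{i,j}$, a polynomial of degree at most $2m - 1$ that still lies in $I(\{y_j\})^m$ and satisfies $h_{i,j} \equiv 1 \pmod{I(\{y_i\})^m}$.

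Now set $H_i := \prod_{j \neq i} h_{i,j}$. Each factor lies in $I(\{y_j\})^m$ for its own $j \neq i$, so $H_i \in \bigcap_{j \neq i} I(\{y_j\})^m = I^m(Y \setminus \{y_i\})$, while the congruence $H_i \equiv 1 \pmod{I(\{y_i\})^m}$ is preserved under products. The polynomial $H_i$ has degree at most $(2m - 1)(|Y| - 1)$. Finally, define
\[ P_{i,\alpha} := (x - y_i)^\alpha \cdot H_i. \]
Then $P_{i,\alpha}$ still lies in $I^m(Y \setminus \{y_i\})$, its class modulo $I(\{y_i\})^m$ equals $(x - y_i)^\alpha$, and its degree is at most $(m - 1) + (2m - 1)(|Y| - 1) = 2m|Y| - m - |Y|$. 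As $(i, \alpha)$ ranges, the $P_{i,\alpha}$ map to a basis of $\bigoplus_i k[x_1,\cdots,x_n]/I(\{y_i\})^m$, completing the proof.

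The key technical point is constructing $h_{i,j}$ with the two competing properties --- vanishing to order $m$ at $y_j$ while being congruent to $1$ modulo $I(\{y_i\})^m$ --- in degree only $2m - 1$; this is exactly what drives the $2m|Y| - m - |Y|$ in the statement. What makes the construction cheap is that $\ell_{i,j}^m$ is \emph{already} a unit modulo $I(\{y_i\})^m$, so one only needs to tack on a Taylor inverse of degree at most $m - 1$, rather than building $h_{i,j}$ from scratch via an iterated CRT construction that would cost many more degrees.
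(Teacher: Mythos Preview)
Your argument is correct and follows essentially the same route as the paper: build, for each $i$, a polynomial $H_i$ of degree $(|Y|-1)(2m-1)$ that is $\equiv 1$ modulo $I^m(\{y_i\})$ and lies in $I^m(\{y_j\})$ for all $j\neq i$, then multiply by $(x-y_i)^\alpha$ with $|\alpha|\leq m-1$ to hit a basis of the quotient. The only cosmetic difference is in how the building block $h_{i,j}$ of degree $2m-1$ is produced: the paper expands $(f+g_j)^{2m-1}=af^m+bg_j^m$ with $f+g_j=1$ linear and takes $h_j=bg_j^m$, whereas you take $\ell_{i,j}^m$ and multiply by a degree-$(m-1)$ Taylor inverse of it modulo $I(\{y_i\})^m$; both yield a polynomial in $I(\{y_j\})^m$ congruent to $1$ modulo $I(\{y_i\})^m$ of degree $2m-1$.
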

        \begin{proof}
            For simplicity of notation, let $I^m(y)=I^m(\{y\})$ for points $y$.
            By the above discussion, it suffices to show that the polynomials of degree at
                most $2m|Y|-m-|Y|$ form a spanning set for $\F_q[x_1,\cdots,x_n]/I^m(Y)$.

            We wish to construct polynomials $p_1,\cdots,p_n$ of degree $(s-1)(2m-1)$
                such that $p_i\equiv 1$ mod $I^m(y_i)$
                but $p_i \equiv 0$ mod $I^m(y_j)$ for $j\neq i$.
            We construct $p_1$; the others are constructed similarly.
            For $j \neq 1$, 
                we can find linear polynomials $f\in I(y_1)$ and $g_j\in I(y_j)$ such that
                $f+g_j=1$.
            Then
            \[ 
                1=(f+g_j)^{2m-1}= af^m+bg_j^m,
            \]
            shows that there is a polynomial $h_j=bg_j^m$ of degree $2m-1$ which is in $I^m(y_j)$
            and is congruent to $1$ mod $I^m(y_1)$.
            The polynomial $h_2\cdots h_s$ will then have degree $(s-1)(2m-1)$ and is
                congruent to $1$ mod $I^m(y_1)$ but congruent to $0$ mod $I^m(y_j)$ for
                $j\neq 1$.

            It is not difficult to see that the polynomials of degree $\leq m-1$ span
                the ring
                $\F_q[x_1,\cdots,x_n]/I^m(y_i)$ for all $i$.
            Now by the Chinese remainder theorem, an element $p$ of $\F_q[x_1,\cdots,x_n]/I^m(Y)$
                is determined by the set of its residues in $\F_q[x_1,\cdots,x_n]/I^m(y_i)$
                for all $i$.
            Let $r_1,\cdots,r_n$ be polynomials of degree at most $m-1$ such that
                $p\equiv r_i$ mod $I^m(y_i)$.
            Then we see that $p\equiv r_1p_1+\cdots + r_sp_s$ mod $I^m(Y)$.
            The polynomial $r_1p_1+\cdots+r_sp_s$ has degree at most $(s-1)(2m-1)+m-1=2ms-m-s$,
                thus giving the conclusion.
        \end{proof}

        Again, we can do better in the case of finite fields.

        \begin{lemma}\label{multiplicity-surjection-lemma}
            The ideal $I^m(\F_q^n)$ is generated by the set
            \[ 
                \{(x_1^q-x_1)^{m_1}\cdots (x_n^q-x_n)^{m_n} \, |\, m_1+\cdots+m_n=m\}.
            \]
            In particular, the set
            \[ 
                \{x_1^{m_1}\cdots x_n^{m_n} \, | \, \floor{\frac{m_1}q}+\cdots + \floor{\frac{m_n}q} \leq m-1\}
            \]
            forms a basis for $\F_q[x_1,\cdots,x_n]/I^m(\F_q^n)$.
        \end{lemma}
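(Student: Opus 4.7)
The plan is to mirror the proof of Lemma \ref{surjection-lemma}: exhibit the generators, apply Macaulay's theorem to bound dimensions via a monomial count, and match this bound against a dimension already known from Lemma \ref{finite-subset-multiplicity-surjection-lemma}. First I would verify the proposed generators lie in $I^m(\F_q^n)$. For any $p\in\F_q^n$, the factorization $x_i^q-x_i=\prod_{a\in\F_q}(x_i-a)$ shows $(x_i-p_i)\mid(x_i^q-x_i)$, so $x_i^q-x_i$ lies in the maximal ideal $\p_p$ at $p$. Hence $(x_1^q-x_1)^{m_1}\cdots(x_n^q-x_n)^{m_n}\in\p_p^{m_1+\cdots+m_n}=\p_p^m$ whenever $\sum m_i=m$, so the ideal $J$ these polynomials generate is contained in $I^m(\F_q^n)$. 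The remainder of the argument shows $J=I^m(\F_q^n)$ and identifies $S$ as a basis of the quotient.

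Next I would fix a graded monomial order (homogeneous lex, say) and examine initial terms. The initial term of $(x_1^q-x_1)^{m_1}\cdots(x_n^q-x_n)^{m_n}$ is $x_1^{qm_1}\cdots x_n^{qm_n}$, so $\In(J)$ contains the monomial ideal $U$ generated by $\{x_1^{qm_1}\cdots x_n^{qm_n}:m_1+\cdots+m_n=m\}$. The key combinatorial observation is that $x_1^{a_1}\cdots x_n^{a_n}\in U$ iff one can choose $m_i\in\N$ with $m_i\leq a_i/q$ and $\sum_i m_i=m$, which happens iff $\sum_i\floor{a_i/q}\geq m$. Consequently the complement of $U$ in the monomials is exactly the set $S$ from the lemma, and the complement of $\In(J)$ is a subset of $S$. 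By Macaulay (Theorem \ref{macaulay-theorem}), the complement of $\In(J)$ is a basis of $\F_q[x_1,\ldots,x_n]/J$, so its image spans the further quotient $\F_q[x_1,\ldots,x_n]/I^m(\F_q^n)$; a fortiori, $S$ spans this quotient.

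To close, I would count $|S|$ and compare with the dimension of the quotient. Writing $a_i=qb_i+r_i$ with $0\leq r_i\leq q-1$ sets up a bijection of $S$ with $\{0,\ldots,q-1\}^n\times\{b\in\N^n:b_1+\cdots+b_n\leq m-1\}$, so $|S|=q^n\binom{m+n-1}{n}$. On the other hand, Lemma \ref{finite-subset-multiplicity-surjection-lemma} applied to $Y=\F_q^n$ (of size $q^n$) for $d$ sufficiently large yields $\dim\F_q[x_1,\ldots,x_n]/I^m(\F_q^n)=\binom{m+n-1}{n}q^n$. Chaining,
\[
q^n\binom{m+n-1}{n}=\dim\F_q[x_1,\ldots,x_n]/I^m(\F_q^n)\leq\dim\F_q[x_1,\ldots,x_n]/J\leq|S|=q^n\binom{m+n-1}{n},
\]
so every inequality is an equality; hence $J=I^m(\F_q^n)$ and $S$ is a basis. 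The only genuinely non-routine step is the combinatorial identification of $U$ in terms of floor sums; after that the argument is bookkeeping with the $(b_i,r_i)$ decomposition and the previous lemma.
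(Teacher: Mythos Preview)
Your proposal is correct and follows essentially the same line as the paper: show that every monomial with $\sum_i\floor{a_i/q}\geq m$ is an initial term of the ideal, note by Macaulay that the remaining monomials $S$ therefore span the quotient, count $|S|=q^n\binom{m+n-1}{n}$, and match this against the known dimension of $\F_q[x_1,\ldots,x_n]/I^m(\F_q^n)$. The only cosmetic difference is that the paper first invokes the identity $I^m(\F_q^n)=(I(\F_q^n))^m$ (established via CRT just before Lemma~\ref{finite-subset-multiplicity-surjection-lemma}) to get the generators outright, whereas you prove only the containment $J\subset I^m(\F_q^n)$ directly and recover equality at the end from the dimension squeeze; both routes are equivalent.
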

        \begin{proof}
            The first statement is a consequence of the fact that $I^m(\F_q^n)= (I(\F_q^n))^m$
                since $F_q^n$ is a finite set.
            For the second statement, note that the set of monomials not among the leading terms
                of $I^m(\F_q^n)$ must be contained in the above set.
            This is because it is easy to show all the monomials in the complement are leading terms
                of $I^m(\F_q^n)$.
            But then the size of 
            \[ 
                \{x_1^{m_1}\cdots x_n^{m_n} \,|\, \floor{\frac{m_1}q}+\cdots+\floor{\frac{m_n}q}\leq m-1\}
            \]
            is $\binom{m+n-1}{n}|Y|$, so they must span.
        \end{proof}
        \begin{lemma}\label{multiplicity-hilbert-equality}
            Let $Y$ be a set in $\F_q^n$.
            If $d\geq n(q-1)+(m-1)q$, then
            \[ 
                \HF^m(Y,d) = \binom{m+n-1}{n}|Y|.
            \]
        \end{lemma}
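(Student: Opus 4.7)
The plan is to combine Lemma \ref{multiplicity-surjection-lemma} with the Chinese Remainder theorem discussion that preceded Lemma \ref{finite-subset-multiplicity-surjection-lemma}. First, by that CRT discussion we already know that $\dim_{\F_q} \F_q[x_1,\cdots,x_n]/I^m(Y) = \binom{m+n-1}{n}|Y|$, so $\HF^m(Y,d)$ is bounded above by this number and increases to it. The task is therefore only to show that the polynomials of degree at most $n(q-1)+(m-1)q$ already surject onto $\F_q[x_1,\cdots,x_n]/I^m(Y)$.

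Since $Y\subset \F_q^n$, we have $I^m(Y)\supset I^m(\F_q^n)$, and the quotient map $\F_q[x_1,\cdots,x_n]\to \F_q[x_1,\cdots,x_n]/I^m(Y)$ factors through $\F_q[x_1,\cdots,x_n]/I^m(\F_q^n)$. Thus it suffices to show that the polynomials of degree at most $n(q-1)+(m-1)q$ surject onto $\F_q[x_1,\cdots,x_n]/I^m(\F_q^n)$. By Lemma \ref{multiplicity-surjection-lemma}, the latter is spanned by the monomials $x_1^{m_1}\cdots x_n^{m_n}$ with $\floor{m_1/q}+\cdots+\floor{m_n/q}\leq m-1$, so I only need to bound the maximum degree of such a monomial.

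The main (and only substantive) step is the elementary extremal calculation: write $m_i = q\floor{m_i/q}+r_i$ with $0\leq r_i\leq q-1$, so
\[
    \sum_{i=1}^n m_i = q\sum_{i=1}^n \floor{\tfrac{m_i}{q}} + \sum_{i=1}^n r_i \leq q(m-1) + n(q-1).
\]
Hence every basis monomial from Lemma \ref{multiplicity-surjection-lemma} has degree at most $(m-1)q+n(q-1)$, which is precisely the bound on $d$ in the statement. Combining with the preceding paragraph, the polynomials of degree at most $d$ surject onto $\F_q[x_1,\cdots,x_n]/I^m(Y)$, giving $\HF^m(Y,d) = \binom{m+n-1}{n}|Y|$ as desired.

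I do not foresee any real obstacle: Lemma \ref{multiplicity-surjection-lemma} does essentially all of the algebraic work, and the extremal calculation above is immediate. The only thing one has to be careful about is ensuring that the upper bound $\binom{m+n-1}{n}|Y|$ on $\HF^m(Y,d)$ is actually attained, which is exactly the content of the CRT discussion surrounding $I^m(Y)=(I(Y))^m$ for finite $Y$.
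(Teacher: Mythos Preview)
Your proposal is correct and follows exactly the paper's approach: the paper's proof is a one-line remark that the basis of Lemma \ref{multiplicity-surjection-lemma} consists of monomials of degree at most $n(q-1)+(m-1)q$, and you have simply filled in the surrounding details (the CRT dimension count, the factorization through $\F_q[x_1,\cdots,x_n]/I^m(\F_q^n)$, and the extremal degree computation $\sum m_i = q\sum\floor{m_i/q}+\sum r_i \le q(m-1)+n(q-1)$) that the paper leaves implicit.
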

        \begin{proof}
            The claim follows by noting
            the basis given in Lemma \ref{multiplicity-surjection-lemma} for the ring $\F_q[x_1,\cdots,x_n]/I^m(\F_q^n)$ 
            consists only of polynomials of degree at most
            $n(q-1)+(m-1)q$.
        \end{proof}

        If $I$ is a subset of $k[x_1,\cdots,x_n]$, define $V^\ell(I)$ to be
            the set of all points $x\in k^n$ such that every polynomial
            in $I$ vanishes with multiplicity at least $\ell$ at $x$.
        It is clear that $V$ is decreasing in both $\ell$ and $I$.
        Note that $I\subset I^m(V^m(I))$ and $X \subset V^m(I^m(X))$.
        Moreover, we have that $V^m(I^m(V^m(I)))=V^m(I)$ and 
            $I^m(V^m(I^m(X)))=I^m(X)$.

        Similar to before, we define 
            $\cl_d^{\ell,m}(X)=V^\ell(I^m(X)_\ld)$, so that 
            if a polynomial of degree at most $d$ 
            vanishes with multiplicity at least $m$
            at each point of $X$, then it vanishes with multiplicity 
            at least $\ell$ at each point of $\cl_d^{\ell,m}(X)$.
        We see that $\cl_d^{\ell,m}(X)$ is decreasing in $d$ and $\ell$ 
            and increasing in $m$ and $X$.
        Using the above, we get 
            $\cl^{j,\ell}_d(\cl^{\ell,m}_d(X))\subset \cl_d^{j,m}(X)$
            and $\cl^{\ell,\ell}_d(\cl^{\ell,m}_d(X))=\cl^{\ell,m}_d(X)
                = \cl^{\ell,m}_d(\cl^{m,m}_d(X))$.
        
        We finally get to
        \begin{lemma}\label{multiplicity-hilbert-function-bound}
            If $X\subset \cl^{\ell,m}_d(Y)$, then
            \[ 
                \HF^\ell(X,d) \leq \HF^m(Y,d).
            \]
        \end{lemma}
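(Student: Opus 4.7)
The plan is to mimic the proof of Proposition \ref{Hilbert-degree-bound}, which handled the $m=\ell=1$ case, by unwinding the definitions of $\cl_d^{\ell,m}$, $I^m$, and $I^\ell$ and observing an inclusion of ideals truncated in degree $\leq d$.

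First, I would rewrite the hypothesis $X\subset \cl_d^{\ell,m}(Y)=V^\ell(I^m(Y)_\ld)$ in the following form: every polynomial $P$ of degree at most $d$ which vanishes to order $m$ at every point of $Y$ must vanish to order $\ell$ at every point of $X$. This is exactly the statement
\[
    I^m(Y)_\ld \subset I^\ell(X)_\ld,
\]
since a polynomial on the left clearly has degree at most $d$ and, by the reformulated hypothesis, lies in $I^\ell(X)$.

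Next, I would take dimensions as vector spaces over $k$ and subtract from $\dim A_\ld$. By the definition of the modified Hilbert function,
\[
    \HF^m(Y,d) = \dim A_\ld - \dim I^m(Y)_\ld \geq \dim A_\ld - \dim I^\ell(X)_\ld = \HF^\ell(X,d),
\]
which is the desired inequality.

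There is no substantive obstacle: the argument is a direct translation of part (2) of Proposition \ref{Hilbert-degree-bound} to the setting with multiplicities, and the only point worth checking carefully is that ``vanishing to order $\ell$ at every point of $X$'' for a polynomial of degree $\leq d$ is literally the definition of membership in $I^\ell(X)_\ld$, so no further manipulation is needed.
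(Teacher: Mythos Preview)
Your proof is correct and follows essentially the same approach as the paper: both establish the inclusion $I^m(Y)_\ld\subset I^\ell(X)_\ld$ by unwinding the definition $\cl_d^{\ell,m}(Y)=V^\ell(I^m(Y)_\ld)$, and then pass to Hilbert functions. The paper records this as the formal chain $I^\ell(X)\supset I^\ell(V^\ell(I^m(Y)_\ld))\supset I^m(Y)_\ld$, but the content is identical to your verbal argument.
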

        \begin{proof}
            Indeed,
            \[
                I^\ell(X) \supset I^\ell(\cl^{\ell,m}_d(Y))
                    = I^\ell(V^\ell(I^m(Y)_\ld))
                    \supset I^m(Y)_\ld.
            \]
        \end{proof}

        Note that the operation $\cl^{\ell,m}_d$ may not 
            be a closure operator
            unless $\ell=m$, so it is somewhat of a misnomer.
        (We have used the symbol $\cl^{\ell,m}_d$ because of its obvious
            relation to $\cl_d$.)
        Indeed, we always have that if $X\subset Y$ then
            $\cl_d^{\ell,m}(X)\subset \cl_d^{\ell,m}(Y)$.
        We can show $X\subset\cl_d^{\ell,m}(X)$ when $\ell\leq m$.
        We can show 
            $\cl_d^{\ell,m}(\cl_d^{\ell,m}(X))=\cl_d^{\ell,m}(X)$ when
            $\ell=m$.

        \begin{proposition}[Schwartz-Zippel with multiplicity \cite{dvir-kopparty-saraf-sudan}]
            \label{multiplicity-vanish}
            Let $X$ be a finite subset of $k$.
            If $d<|X|(m-\ell+1)+\ell-1$, then $\cl_d^{\ell,m}(X)=k$.
        \end{proposition}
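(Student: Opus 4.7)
The plan is to apply the elementary one-variable Schwartz-Zippel inequality (for nonzero $Q \in k[t]$, $\sum_{a \in k} \ord_a(Q) \leq \deg Q$) to each Hasse derivative $D^j P$ for $j = 0, 1, \ldots, \ell - 1$. Given $P \in I^m(X)_\ld$ and $y \in k$, we want $P$ to vanish to order at least $\ell$ at $y$, which by the Hasse derivative characterization is equivalent to $D^j P(y) = 0$ for all $j < \ell$. We will show the stronger statement that each such $D^j P$ is the zero polynomial.

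Two standard facts about $D^j P$ are needed: its degree is at most $d - j$, and it vanishes to order at least $m - j$ at each $x \in X$. The second uses the composition identity $D^i \circ D^j = \binom{i+j}{i} D^{i+j}$: if $D^k P(x) = 0$ for all $k < m$, then $D^i(D^j P)(x) = \binom{i+j}{i} D^{i+j} P(x) = 0$ for every $i < m - j$. Hence if $D^j P$ were nonzero, Schwartz-Zippel applied to it yields
\[
    |X|(m-j) \leq \sum_{x \in X} \ord_x(D^j P) \leq \deg D^j P \leq d - j,
\]
that is, $d \geq |X|(m-j) + j$.

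The final step is to check that our hypothesis rules out this inequality for every $j \in \{0, 1, \ldots, \ell - 1\}$ at once. The difference $\bigl(|X|(m-j)+j\bigr) - \bigl(|X|(m-\ell+1)+\ell-1\bigr)$ factors as $(|X|-1)(\ell - 1 - j) \geq 0$, so $|X|(m-j) + j$ is minimized over the range at $j = \ell - 1$, where it equals the right-hand side of the hypothesis. Therefore $d < |X|(m-\ell+1) + \ell - 1$ implies $d < |X|(m-j) + j$ for every such $j$, forcing $D^j P = 0$ in each case; consequently $P$ vanishes to order at least $\ell$ at every $y \in k$, which is exactly the claim $\cl_d^{\ell, m}(X) = k$.

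There is no serious technical obstacle, since one-variable Schwartz-Zippel is elementary and the needed Hasse derivative facts are routine. The only conceptual point is the algebraic observation unifying the $\ell$ individual inequalities into the single stated bound via monotonicity in $j$; in fact the $j = 0$ case alone already gives $d < |X|m$ and hence $P = 0$, but presenting the full range is what makes the stated form of the bound transparent.
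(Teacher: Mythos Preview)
Your proof is correct and follows essentially the same approach as the paper: take Hasse derivatives $D^jP$ for $j<\ell$, use that each has degree at most $d-j$ and vanishes to order at least $m-j$ on $X$, and apply the one-variable root-counting bound to force $D^jP=0$. You spell out the monotonicity in $j$ more explicitly than the paper does, and your final remark that the $j=0$ case alone already forces $P=0$ is a nice observation (the paper does not point this out).
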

        \begin{proof}
            For notional purposes, let $i=(i_1,\cdots,i_n)\in \N^n$ and
                $|i|=i_1+\cdots+i_n$.
            Also, let $D^{i}=D^{i_1,\cdots,i_n}$.

            If $P$ is a polynomial of degree $d$ vanishing to order $m$
                at a point in $X$,
                then $D^iP$ is a polynomial of degree $d-|i|$
                vanishing to order $m-|i|$ at that point.
            If $d-|i|<|X|(m-|i|)$, then $D^iP$ must vanish identically.
            When $d-\ell+1 < |X|(m-\ell+1)$, this is true for all
                $|i|\leq \ell-1$, so any polynomial vanishing of order
                $m$ on $X$ must vanish to order $\ell$ on all of $k$,
                as desired.
        \end{proof}

        In Theorem 13 of
            \cite{dvir-kopparty-saraf-sudan},
            Dvir, Kopparty, Saraf and Sudan prove a
            theorem called statistical Kakeya
            for curves.
        Due to Lemma \ref{multiplicity-surjection-lemma}, we can
            improve their bound.

        First, a degree $\Lambda$ curve $C$ in $\F_q^n$ is the set
        \[(
            \{(C_1(\lambda),\cdots,C_n(\lambda)) \,|\, \lambda\in F_q,
                C_1, \cdots, C_n \in \F_q[x]_{\leq \Lambda}\},
        \]
        that is, the values a tuple $(C_1(\lambda),\cdots,C_n(\lambda))$
            takes, where $C_1$, $\cdots$, $C_n$ are polynomials in one
            variable of degree at most $\Lambda$.
        Using Proposition \ref{multiplicity-vanish}, we see that if 
            $X\subset C$ for some degree $\Lambda$ curve and 
            $\Lambda d < |X|(m-\ell+1)+\ell-1$, then
            $C\subset \cl_d^{\ell,m}(X)$.
        We now have
        \begin{theorem}[Statistical Kakeya for Curves]
            \label{statistical-kakeya}
            Let $X$ and $Y$ be subsets of $\F_q^n$.
            Suppose that for every point $x\in X$, there is a curve
                $C_x$ of degree at most $\Lambda$
                through $x$ which intersects $Y$ in at least $\tau$
                points.
            Then
            \[ 
                |X| \leq \paren{1+\frac{\Lambda(q-1)}{\tau}}^n |Y|.
            \]
        \end{theorem}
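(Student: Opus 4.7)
The plan is to interpret the curve hypothesis as a statement about $\cl_d^{\ell,m}$ and then push the multiplicity Hilbert function inequalities through. Fix parameters $\ell\leq m$ and $d$ (to be chosen at the end). For each $x\in X$, the curve $C_x$ passes through $x$ and meets $Y$ in at least $\tau$ points; by the curve-pullback version of Proposition \ref{multiplicity-vanish} recorded in the paragraph before the theorem, whenever $\Lambda d<\tau(m-\ell+1)+\ell-1$ we have $C_x\subset \cl_d^{\ell,m}(Y\cap C_x)\subset\cl_d^{\ell,m}(Y)$, so in particular $x\in\cl_d^{\ell,m}(Y)$. Ranging over $x\in X$ gives $X\subset\cl_d^{\ell,m}(Y)$.

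With this inclusion, Lemma \ref{multiplicity-hilbert-function-bound} yields $\HF^\ell(X,d)\leq \HF^m(Y,d)$. On the right, the Chinese Remainder argument recorded in the discussion before Lemma \ref{finite-subset-multiplicity-surjection-lemma} gives the uniform bound $\HF^m(Y,d)\leq \dim_{\F_q} \F_q[x_1,\ldots,x_n]/I^m(Y)=\binom{m+n-1}{n}|Y|$. On the left, provided $d\geq n(q-1)+(\ell-1)q$ so that Lemma \ref{multiplicity-hilbert-equality} applies, we have the matching equality $\HF^\ell(X,d)=\binom{\ell+n-1}{n}|X|$. Combining these gives the core inequality
\[
\binom{\ell+n-1}{n}|X| \leq \binom{m+n-1}{n}|Y|.
\]

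It remains to extract the claimed constant by optimizing $\ell$, $m$, $d$. Since $\binom{m+n-1}{n}/\binom{\ell+n-1}{n}\to (m/\ell)^n$ as $\ell\to\infty$ with $m/\ell\to\rho$, the strategy is to let $\ell\to\infty$ and take $m$ growing with the smallest ratio $\rho$ compatible with the two constraints $d\geq n(q-1)+(\ell-1)q$ and $\Lambda d<\tau(m-\ell+1)+\ell-1$. Taking $d=n(q-1)+(\ell-1)q$ and solving the second constraint for $m$ forces $m/\ell$ to approach the required critical ratio, after which the binomial ratio tends to the claimed $n$th power and the bound follows in the limit.

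The main obstacle is the final parameter calibration: one must track the lower-order integer corrections in the threshold $d\geq n(q-1)+(\ell-1)q$ and in the curve-pullback inequality simultaneously, so that the limiting ratio $m/\ell$ is exactly $1+\Lambda(q-1)/\tau$ rather than a slightly weaker value. Since the two sides of the final inequality $|X|\leq (1+\Lambda(q-1)/\tau)^n|Y|$ do not depend on $\ell$ or $m$, it suffices to verify the required ratio holds asymptotically, and the desired bound drops out by passing to the limit $\ell\to\infty$.
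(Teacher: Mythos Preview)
Your proposal is correct and follows essentially the same route as the paper: use the curve version of Proposition~\ref{multiplicity-vanish} to get $X\subset\cl_d^{\ell,m}(Y)$, apply Lemma~\ref{multiplicity-hilbert-function-bound} together with Lemma~\ref{multiplicity-hilbert-equality} and the trivial upper bound on $\HF^m(Y,d)$ to obtain $\binom{\ell+n-1}{n}|X|\le\binom{m+n-1}{n}|Y|$, then set $d=n(q-1)+(\ell-1)q$, choose $m=\lceil(1+\Lambda(q-1)/\tau)\ell\rceil$, and let $\ell\to\infty$. The paper carries out the parameter algebra you flag as the ``main obstacle'' explicitly (checking the constraint holds once $\ell>\Lambda n(q-1)-\tau+1$), but your asymptotic description of it is accurate.
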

        We remark that this is the Statistical Kakeya for Curves in
            \cite[Theorem 13]{dvir-kopparty-saraf-sudan}, with $S=X$,
            $K=Y$ and $\tau = \eta q$.
        Our bound is strictly 
            better whenever $X$ is not the whole space $\F_q^n$
            and $n\geq 2$ (also, we drop the condition $\tau>\Lambda$).
        \begin{proof}
            Let $d$, $\ell$ and $m$ be constants to be chosen later.
            When $\Lambda d < \tau(m-\ell+1)+\ell-1$,
                we see that $X\subset \cl_d^{\ell,m}(Y)$.
            If in addition, we have that $d\geq (\ell-1)q+n(q-1)$, then
            \[ 
                \binom{\ell+n-1}{n} |X|
                = \HF^\ell(X,d) \leq \HF^m(Y,d)
                \leq \binom{m+n-1}{n} |Y|.
            \]
            Take $d= (\ell-1)q+n(q-1)$.
            Then the above inequality is true when
            \[ 
                \Lambda((\ell-1)q+n(q-1)) < \tau(m-\ell+1)+\ell-1.
            \]
            Rearranging gives
            \[ 
                \Lambda\ell(q-1) + \Lambda n(q-1)-\tau+1 <
                \tau(m-\ell)+\ell.
            \]
            Set $m = \ceiling{\paren{1+\frac{\Lambda(q-1)}\tau}\ell}$.
            When $\ell > \Lambda n(q-1)-\tau +1$, the above inequality
                is then satisfied.
            Taking the limit as $\ell\to \infty$ in the inequality
            \[ 
                \binom{\ell+n-1}{n}|X| \leq \binom{m+n-1}{n}|Y|,
            \]
            then gives the desired inequality.
        \end{proof}
        \begin{corollary}[Our Summer Research Problem]
            Let $L_1$, $\cdots$, $L_c$ be lines in $\F_q^n$,
                and $X = \bigcup L_i$.
            On each $L_i$, pick a subset $\gamma_i$ such that
                $|\gamma_i|\geq \frac q2$, and let $Y= \bigcup \gamma_i$.
            Then
            \[ 
                |X|\leq \paren{3-\frac{2}{q}}^n |Y|.
            \]
        \end{corollary}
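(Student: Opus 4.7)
The plan is to apply Theorem \ref{statistical-kakeya} directly, with $\Lambda=1$ (since every line is a curve of degree one) and $\tau = q/2$. For every point $x\in X$, by construction $x$ lies on some line $L_i$, and that line is a degree-$1$ curve through $x$ whose intersection with $Y$ contains $\gamma_i$, hence has at least $\lceil q/2\rceil\geq q/2$ points. So the hypothesis of Theorem \ref{statistical-kakeya} is satisfied with these parameters.

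Then the conclusion of that theorem gives
\[
    |X| \leq \paren{1 + \frac{\Lambda(q-1)}{\tau}}^n |Y|
        = \paren{1 + \frac{q-1}{q/2}}^n |Y|
        = \paren{1 + \frac{2(q-1)}{q}}^n |Y|
        = \paren{3 - \frac{2}{q}}^n |Y|,
\]
which is exactly the desired bound. There is no real obstacle here: the corollary is essentially a plug-in specialization of the Statistical Kakeya for Curves. The only thing worth noting is that the improvement over the earlier corollary (which gave constant $n!\cdot 2^n$) comes from the multiplicity machinery packed inside Theorem \ref{statistical-kakeya}, which replaces the $n!$ factor arising from the FKG-based bound of Theorem \ref{finite-field-closure-bound} with the much sharper constant $(3-2/q)^n$.
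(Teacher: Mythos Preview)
Your proof is correct and matches the paper's own argument exactly: apply the Statistical Kakeya for Curves (Theorem \ref{statistical-kakeya}) with $\Lambda=1$ and $\tau=q/2$, then simplify the resulting constant to $(3-2/q)^n$.
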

        \begin{proof}
            Just apply the statistical Kakeya for curves with $\Lambda = 1$
                and $\tau = \frac q2$.
        \end{proof}
        \begin{corollary}[Finite field Nikodym Conjecture]
            A Nikodym set $N$ in $\F_q^n$ satisfies
            \[ 
                |N| \geq \frac{q^n}{(3-\frac 2q)^n}.
            \]
        \end{corollary}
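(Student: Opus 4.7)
The plan is to reduce the Nikodym estimate directly to the preceding corollary (Our Summer Research Problem), in exactly the same way that the original Nikodym corollary in Section~\ref{zariski-closure} was reduced to the original summer research corollary. The key observation is that the Nikodym hypothesis gives us, for each point $x\in \F_q^n$, a line $L_x$ through $x$ with $|L_x\cap N|\geq q/2$. Taking $X = \F_q^n$ and $Y = N$, we have $X = \bigcup_{x\in \F_q^n} L_x$ (since $x\in L_x$ for every $x$), and the subsets $\gamma_x := L_x\cap N\subset L_x$ all satisfy $|\gamma_x|\geq q/2$ with $\bigcup_x \gamma_x \subset N = Y$.

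Next I would simply invoke the preceding corollary (the improved version of Our Summer Research Problem via statistical Kakeya for curves), which asserts
\[
    |X| \leq \paren{3-\frac{2}{q}}^n |Y|,
\]
under exactly these hypotheses. Substituting $|X| = q^n$ and $|Y| = |N|$ and rearranging yields
\[
    |N| \geq \frac{q^n}{\paren{3-\frac{2}{q}}^n},
\]
which is the claim.

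The only subtlety worth spelling out is that the preceding corollary was stated with a finite list $L_1,\dots,L_c$ of lines and subsets $\gamma_i\subset L_i$, whereas here a priori we select one line $L_x$ per point $x\in \F_q^n$. Since $\F_q^n$ is finite, this is really just a finite (possibly repeated) collection of lines, so taking $\{L_1,\dots,L_c\}$ to be the distinct lines that appear among $\{L_x : x\in \F_q^n\}$ and $\gamma_i = L_i \cap N$ puts us exactly in the framework of the corollary, and no further argument is required. There is no genuine obstacle here; the whole content of the result is already packaged in Theorem~\ref{statistical-kakeya} and its line-case specialization.
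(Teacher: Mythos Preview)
Your proof is correct and follows exactly the paper's approach: the paper's proof is the single line ``Set $X=\F_q^n$ and $Y=N$ in the previous corollary,'' and you have written out precisely this reduction, with the extra (correct) care of noting that the lines $L_x$ cover $\F_q^n$ and that $\bigcup_x (L_x\cap N)\subset N$.
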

        \begin{proof}
            Set $X=\F_q^n$ and $Y=N$ in the previous corollary.
        \end{proof}
        \begin{corollary}
            Let $L_1$, $\cdots$, $L_c$ be lines in $\F_q^n$,
                and $X = \bigcup L_i$.
            On each $L_i$, pick a subset $\gamma_i$ such that
                $|\gamma_i|\geq q^\alpha$
                for some $0<\alpha<1$
                and let $Y= \bigcup \gamma_i$.
            Then
            \[ 
                |X| \leq \paren{1+\frac{q-1}{q^\alpha}}^n|Y|.
            \]
        \end{corollary}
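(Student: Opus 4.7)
The plan is to apply the Statistical Kakeya for Curves (Theorem \ref{statistical-kakeya}) directly, just as the two preceding corollaries did. Lines are curves of degree $\Lambda = 1$: given any $x \in X$, choose an index $i$ with $x \in L_i$ and take $C_x = L_i$. Then $C_x$ has degree at most $1$ and $C_x \cap Y \supseteq \gamma_i$, so $|C_x \cap Y| \geq q^\alpha$. Thus the hypotheses of Theorem \ref{statistical-kakeya} are satisfied with $\Lambda = 1$ and $\tau = q^\alpha$.

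Plugging these values into the conclusion of the theorem gives immediately
\[
    |X| \leq \paren{1 + \frac{\Lambda(q-1)}{\tau}}^n |Y|
        = \paren{1 + \frac{q-1}{q^\alpha}}^n |Y|,
\]
which is exactly the stated bound. No step here is really an obstacle — the work has all been done already in proving Theorem \ref{statistical-kakeya}, and this corollary simply specializes the parameters; the only thing to verify is the routine translation of the hypothesis on $|\gamma_i|$ into the hypothesis on $|C_x \cap Y|$ required by the theorem.
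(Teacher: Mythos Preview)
Your proof is correct and follows exactly the paper's approach: the paper's proof is simply ``Apply the statistical Kakeya for curves with $\Lambda = 1$ and $\tau = q^\alpha$.'' You have just filled in the routine verification that the hypotheses of Theorem~\ref{statistical-kakeya} are met.
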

        \begin{proof}
            Apply the statistical Kakeya for curves with $\Lambda = 1$ and
                $\tau=q^\alpha$.
        \end{proof}
        
        We now work towards our bound on $|\cl_d^{\ell,m}(Y)|$.
        \begin{theorem}\label{Hilbert-growth}
            Let $I$ be an ideal of $k[x_1,\cdots,x_n]$.
            Suppose $m_1\geq m_2$.
            Then
            \[ 
                \HF_I(\,m_1)\binom{n+m_2}{n} \leq \HF_I(\,m_2)\binom{n+m_1}{n}.
            \]
        \end{theorem}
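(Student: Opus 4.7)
The plan is to reduce the statement to a combinatorial inequality about lower sets in $\N^{n+1}$ and then dispatch that inequality by a short double count. Fix any graded monomial order. By Corollary \ref{macaulay-corollary}, $\HF_I(m) = |S_{\leq m}|$, where $S := \N^n \setminus \In(I)$ (the set of exponent vectors of ``standard monomials'') is a downward-closed subset of $\N^n$.

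Next I would homogenize. The map $(a_1,\dots,a_n) \mapsto (m - |a|, a_1,\dots,a_n)$ identifies $S_{\leq m}$ with the degree-$m$ slice $\tilde S_m := \{\tilde a \in \tilde S : |\tilde a| = m\}$ of the lower set $\tilde S := \N \times S \subset \N^{n+1}$; under the same bijection, $\binom{n+m}{n}$ counts all elements of $\N^{n+1}$ of total degree $m$. Hence, setting $M := n+1$, it suffices to prove for every lower set $T \subset \N^M$ the local inequality
\[
(m+1)\,|T_{m+1}| \leq (m+M)\,|T_m|, \qquad m \geq 0.
\]
Since $\binom{M+m}{M-1}/\binom{M-1+m}{M-1} = (m+M)/(m+1)$, iterating this from $m_2$ up to $m_1$ produces exactly $|T_{m_1}|/\binom{M-1+m_1}{M-1} \leq |T_{m_2}|/\binom{M-1+m_2}{M-1}$, which is the theorem after applying it to $T = \tilde S$.

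To prove the local inequality, I would double count the weighted pairs $(a,i)$ with $a \in T_{m+1}$ and $1 \leq i \leq M$, each carrying multiplicity $a_i$. Summing over $a$ first gives $(m+1)|T_{m+1}|$, because $\sum_i a_i = m+1$ for every $a \in T_{m+1}$. Summing over $i$ first, for each $a$ with $a_i \geq 1$ set $b := a - e_i$; since $T$ is a lower set and $a \in T$, we have $b \in T_m$, and the multiplicity becomes $b_i + 1$. Dropping the side constraint ``$b + e_i \in T$'' can only enlarge the sum, so
\[
(m+1)|T_{m+1}| = \sum_{i=1}^{M} \sum_{\substack{b \in T_m \\ b + e_i \in T}}(b_i + 1) \;\leq\; \sum_{b \in T_m} \sum_{i=1}^{M}(b_i + 1) = \sum_{b \in T_m}(|b| + M) = (m+M)|T_m|,
\]
as required. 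The only place requiring care is the homogenization: the analogous double count carried out directly on $S \subset \N^n$ bounds only the shell $|S_{m+1}|$ in terms of $|S_m|$, which is too weak to yield the correct recursion in terms of the cumulative count $|S_{\leq m}|$. Passing to $\tilde S \subset \N^{n+1}$ converts the cumulative count into the single slice $|\tilde S_m|$, at which point the double count delivers precisely the ratio $(m+M)/(m+1)$ needed.
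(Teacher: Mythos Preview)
Your proof is correct and takes a genuinely different route from the paper's. The paper works with the \emph{upper} set $S=\In(I)$: it first establishes, by induction on $n+d$, the lemma $|(S^+)_{\leq d+1}|\geq \frac{n+d+1}{d+1}|S_{\leq d}|$ for arbitrary $S\subset\N^n$; applied to an upper set (where $S^+=S$) this yields $\binom{n+m_2}{n}|S_{\leq m_1}|\geq \binom{n+m_1}{n}|S_{\leq m_2}|$, and the theorem follows by subtracting from $|M_{\leq d}|=\binom{n+d}{n}$ to pass to the complementary lower set of standard monomials. You instead work directly with the lower set, homogenize by adjoining a slack coordinate so that the cumulative count $|S_{\leq m}|$ becomes a single degree slice $|\tilde S_m|$ of $\tilde S=\N\times S\subset\N^{n+1}$, and then prove the slice inequality $(m+1)|T_{m+1}|\leq (m+M)|T_m|$ for lower sets $T$ by a one-line weighted double count. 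Your argument is shorter, avoids the induction, and avoids the complementation step; the paper's lemma is phrased for arbitrary $S$, and its lower-set corollary is reused verbatim in the proof of Theorem~\ref{multiplicity-set-bound}, but your slice inequality (specialized back to $M=n$ via the same homogenization in reverse) yields that corollary just as well.
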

        First, we make some definitions.
        We give $\N^n$ the usual partial order where
            $(a_1,\cdots,a_n)\leq (b_1,\cdots,b_n)$ if $a_i\leq b_i$ for all $i$.
        Moreover, for $a=(a_1,\cdots,a_n)\in \N^n$, we let $|a|$ denote the sum
            $a_1+\cdots+a_n$.

        Let $S$ be a subset of $\N^n$.
        We define $S_{\leq d}$ to be the set of points $a\in S$ with $|a|\leq d$.
        Similarly, we let $S_{=d}$ to be the set of points $a\in S$ with $|a|=d$.
        For $v\in \N^n$, we let $S+v$ to be the set of points $a+v$ where $a\in S$.
        Finally, we define
        \[ 
            S^+ = S\cup \bigcup_{i=1}^n (S+e_i)
        \]
        where $e_i=(0,\cdots,0,1,0,\cdots,0)$ is the $i$th unit vector.
        Thus, we think of $S^+$ as the points of $S$ along with those points which are
            one unit above them.
        Recall that $S$ is an upper set if for every $x\in S$ and $y\geq x$, then
            $y\in S$.
        It is clear that $S\subset \N^n$ is an upper set if and only if $S=S^+$.
        We have the following useful lemma:

        \begin{lemma}
            For $S\subset \N^n$,
            \[ 
                |(S^+)_{\leq d+1}| \geq \frac{n+d+1}{d+1} |S_{\leq d}|.
            \]
        \end{lemma}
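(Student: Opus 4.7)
The plan is to reformulate the inequality as a local-LYM-type statement on compositions, and then prove that statement by a stars-and-bars double count.

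First, by replacing $S$ with $S_{\leq d}$ we may assume $S \subset \N^n_{\leq d}$: indeed $(S_{\leq d})^+ \subseteq S^+$ and $(S_{\leq d})^+ \subseteq \N^n_{\leq d+1}$, so $(S_{\leq d})^+ \subseteq (S^+)_{\leq d+1}$, and proving the inequality for $S_{\leq d}$ yields it for the original $S$. Under this assumption $S^+$ already lies in $\N^n_{\leq d+1}$, and the claim becomes $(d+1)|S^+| \geq (n+d+1)|S|$.

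Next, introduce a slack coordinate. Let $\phi(a_1,\ldots,a_n) = (d-|a|,a_1,\ldots,a_n)$ define a bijection $\N^n_{\leq d} \to \N^{n+1}_{=d}$, and let $\phi'$ be the analogous bijection $\N^n_{\leq d+1} \to \N^{n+1}_{=d+1}$. Writing $\tilde e_0,\tilde e_1,\ldots,\tilde e_n$ for the standard basis of $\N^{n+1}$, one checks directly that $\phi'(a+e_i)=\phi(a)+\tilde e_i$ for $i\geq 1$ and $\phi'(a)=\phi(a)+\tilde e_0$ for the inclusion $\N^n_{\leq d}\hookrightarrow \N^n_{\leq d+1}$. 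Setting $T=\phi(S)$, the image of $S^+$ under $\phi'$ is $T^+ := \bigcup_{i=0}^n (T+\tilde e_i) \subset \N^{n+1}_{=d+1}$. The bijections preserve cardinality, so it suffices to show $(d+1)|T^+| \geq (n+d+1)|T|$ for an arbitrary $T \subset \N^{n+1}_{=d}$.

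To prove this, represent each composition in $\N^{n+1}_{=d}$ as a stars-and-bars string of length $n+d$ ($n$ bars separating $n+1$ blocks of stars, with the $i$th block containing $a_i$ stars). Inserting an extra star at one of the $n+d+1$ available positions produces a string of length $n+d+1$ representing some $\tilde b = \tilde a + \tilde e_i$, where $i$ is the block the inserted star lands in. Conversely, each $\tilde b \in \N^{n+1}_{=d+1}$ arises in exactly $d+1$ such ways --- one per star of $\tilde b$ --- by deleting that star to recover $(\tilde a, p)$. This gives a bijection $\N^{n+1}_{=d}\times\{1,\ldots,n+d+1\} \to \N^{n+1}_{=d+1}\times\{1,\ldots,d+1\}$. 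Restricting the source to pairs with $\tilde a \in T$ sends us into $T^+ \times \{1,\ldots,d+1\}$, and so $(n+d+1)|T| \leq (d+1)|T^+|$, as desired.

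The one piece of real creativity is the slack-variable reformulation, which converts the inhomogeneous comparison between the $\leq d$ and $\leq d+1$ layers of $\N^n$ into a homogeneous comparison between adjacent layers of a simplex in $\N^{n+1}$; once that reduction is in place, the stars-and-bars bijection is essentially forced and the double count is routine. I do not expect any further obstacles.
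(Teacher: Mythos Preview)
Your argument is correct and takes a genuinely different route from the paper. The paper proves the lemma by induction on $n+d$: it projects $S_{=d}$ and $(S^+)_{=d+1}$ onto the first $n-1$ coordinates to handle the top layer via the inductive hypothesis in dimension $n-1$, and then combines this with the inductive hypothesis for $d-1$ together with the trivial bound $|(S^+)_{\leq d}|\geq |S_{\leq d}|$. Your approach instead linearizes the problem: the slack-variable map identifies $\N^n_{\leq d}$ with the level set $\N^{n+1}_{=d}$ and turns $S^+$ into the ``upper shadow'' $\bigcup_{i=0}^n(T+\tilde e_i)$, after which the inequality $(d+1)|T^+|\geq (n+d+1)|T|$ is exactly the normalized-matching (local LYM) property for adjacent ranks of the composition poset, and your stars-and-bars bijection is a clean proof of that. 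Your version avoids induction entirely and makes the connection to classical shadow inequalities explicit; the paper's induction is perhaps more self-contained but the bookkeeping in the inductive step is a bit more delicate. Both arguments are short, and neither uses more than elementary counting.
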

        \begin{proof}
            We prove the lemma by induction on the sum $n+d$.
            The base cases when $n=1$ or $d=0$ are trivial.

            Suppose $n\geq 2$ and $d\geq 1$ and that we have proved the lemma
                for all smaller sums $n+d$.
            First, we wish to show that $|(S^+)_{=d+1}|\geq \frac{n+d}{d+1}|S_{=d}|$.
            Let $V$ be the projection of $(S^+)_{=d+1}$ onto the first $n-1$ components,
                and $U$ be the projection of $S_{=d}$ onto the first $n-1$ components.
            It is clear that $|(S^+)_{=d+1}|=|V|=|V_{\leq d+1}|$.
            Similarly, we have $|S_{=d}|=|U|=|U_{\leq d}|$.
            Moreover, it is easy to show that $U^+\subset V$.
            Applying the inductive hypothesis, we have
            \[ 
                |(S^+)_{=d+1}| = |V_{\leq d+1}| \geq |(U^+)_{\leq d+1}|
                    \geq \frac{n+d}{d+1} |U_{\leq d}| = \frac{n+d}{d+1}|S_{=d}|,
            \]
            as desired.

            To finish the proof, by the induction hypothesis, we have
                $|(S^+)_{\leq d}|\geq \frac{n+d}{d}|S_{\leq d-1}|$.
            Moreover, since $S^+\supset S$, we also have $|(S^+)_{\leq d}|\geq |S_{\leq d}|$.
            Combining these estimates with that of $|(S^+)_{=d+1}|$ gives
            \begin{\IE}{rCl}
                |(S^+)_{\leq d+1}| &=&
                    |(S^+)_{=d+1}| + |(S^+)_{\leq d}|\\
                    &\geq& \frac{n+d}{d+1}|S_{=d}|+\frac{d}{d+1}\cdot \frac{n+d}{d}|S_{\leq d-1}|
                        + \frac1{d+1} |S_{\leq d}|\\
                    &=& \frac{n+d}{d+1}|S_{\leq d}|+\frac 1{d+1}|S_{\leq d}|\\
                    &=& \frac{n+d+1}{d+1} |S_{\leq d}|,
            \end{\IE}
            as claimed.
        \end{proof}
        \begin{proof}[Proof of Theorem \ref{Hilbert-growth}]
            Let $S=\In(I)$ be the set of monomials which are initial terms of $I$.
            Since $S$ is an upper set, we have $S^+=S$, so $|S_{\leq d+1}|\geq \frac{n+d+1}{d+1}|S_{\leq d}|$.
            By induction, we can show that
            \[ 
                \binom{n+m_2}{n}|S_{\leq m_1}| \geq \binom{n+m_1}{n}|S_{\leq m_1}|
            \]
            for all $m_1\geq m_2$.
            Let $M$ be the set of all monomials, and $T$ be the complement of $S$ in $M$.
            Then $|M_{\leq d}| = |S_{\leq d}|+|T_{\leq d}|$.
            By Corollary \ref{macaulay-corollary}, we see that $\HF_I(d)=|T_{\leq d}|$.
            Since $|M_{\leq d}|=\binom{n+d}{n}$, 
                it is clear $\binom{n+m_2}{n}|M_{\leq m_1}|=\binom{n+m_1}{n}|M_{\leq m_2}|$.
            Subtracting the above inequality from this equality, we conclude
            \[ 
                \HF_I(m_1)\binom{n+m_2}{n} \leq \HF_I(m_2)\binom{n+m_1}{n} ,
            \]
            as desired.
        \end{proof}
        We remark that the proof works for any subset $T\subset \N^n$ which is a lower set:
            if $a\in T$ and $b\leq a$, then $b\in T$.
        For such $T$, we have $\binom{n+m_2}{n}|T_{\leq m_1}|\leq \binom{n+m_1}{n}|T_{\leq m_2}|$
            for all $m_1\geq m_2$.
        We shall use this statement in the following theorem.
        \begin{theorem}\label{multiplicity-set-bound}
            Let $Y\subset \F_q^n$.
            Then
            \[ 
                \HF^m(\F_q^n,d) |Y| \leq \HF^m(Y,d)q^n.
            \]
        \end{theorem}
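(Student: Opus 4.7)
The plan is to mirror the argument of Theorem~\ref{size-bound}, handling the new complication that $T' := \N^n \setminus \In(I^m(\F_q^n))$ is no longer a box. Fix a graded monomial order, identify monomials with $\N^n$, and let $S' := \N^n \setminus \In(I^m(Y))$ and $M := \{a\in\N^n : |a|\leq d\}$. All three of $S'$, $T'$, and $M$ are lower sets, and $S'\subset T'$ since $I^m(\F_q^n)\subset I^m(Y)$. By Corollary~\ref{macaulay-corollary}, Lemma~\ref{multiplicity-surjection-lemma}, and the Chinese remainder theorem one has $|S'\cap M| = \HF^m(Y,d)$, $|M\cap T'| = \HF^m(\F_q^n,d)$, $|S'| = \binom{m+n-1}{n}|Y|$, and $|T'| = \binom{m+n-1}{n}q^n$. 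So the theorem is equivalent to $|S'|\cdot|M\cap T'| \leq |T'|\cdot|S'\cap M|$, exactly the form of Theorem~\ref{size-bound}.

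By Lemma~\ref{multiplicity-surjection-lemma}, $T'$ decomposes disjointly as $T' = \bigsqcup_{c\in C} B_c$, where $C = \{c\in\N^n : |c|\leq m-1\}$ and $B_c = qc + \{0,\dots,q-1\}^n$ is a translated box. Each $B_c$ is a distributive sublattice, on which the FKG argument of Theorem~\ref{size-bound} applies directly (with $\mu\equiv 1$ and the indicators of $S'$ and $M$ restricted to $B_c$, which remain decreasing), yielding the per-box estimate
\[
\sigma_c\mu_c \leq q^n\lambda_c,\qquad \sigma_c := |S'\cap B_c|,\ \mu_c := |M\cap B_c|,\ \lambda_c := |S'\cap M\cap B_c|.
\]
Summing over $c\in C$ gives $\sum_c \sigma_c\mu_c \leq q^n|S'\cap M|$. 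The whole proof then reduces to establishing the Chebyshev-type inequality
\[
\biggl(\sum_{c\in C}\sigma_c\biggr)\biggl(\sum_{c\in C}\mu_c\biggr) \leq |C|\sum_{c\in C}\sigma_c\mu_c,
\]
since then $|S'|\cdot|M\cap T'| = (\sum_c\sigma_c)(\sum_c\mu_c) \leq |C|\cdot q^n|S'\cap M| = |T'|\cdot|S'\cap M|$, which is what is wanted.

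The main obstacle is this Chebyshev step, and the key observation is that it follows directly from the remark on lower sets after Theorem~\ref{Hilbert-growth}. Two monotonicity properties do the work: first, $\mu_c$ depends only on $|c|$ and is nonincreasing in $|c|$ (because $|qc+r| = q|c|+|r|$), so $\mu_c = \sum_{j=0}^{m-1}\nu_j\mathbf{1}_{|c|\leq j}$ with $\nu_j\geq 0$; second, a shift argument shows $\sigma_c$ is componentwise nonincreasing in $c$ (for $c\leq c'$ in $\N^n$, the map $a\mapsto a-q(c'-c)$ sends $S'\cap B_{c'}$ injectively into $S'\cap B_c$ since $S'$ is a lower set), so each level set $A_t := \{c\in\N^n : \sigma_c\geq t\}$ is a lower set in $\N^n$ contained in $C$, and $\sigma_c = \int_0^\infty \mathbf{1}_{A_t}(c)\,dt$. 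Substituting these two decompositions, the Chebyshev inequality reduces to the pointwise bound $|A_t|\cdot|B_j|\leq |C|\cdot|A_t\cap B_j|$ for every $t\geq 0$ and every $j\in\{0,\dots,m-1\}$, where $B_j = \{c\in\N^n : |c|\leq j\}$. This is precisely the remark following Theorem~\ref{Hilbert-growth} applied to the lower set $A_t$ with $m_1 = m-1$ and $m_2 = j$: since $A_t\subset C$, one has $|(A_t)_{\leq m-1}| = |A_t|$ and $\binom{n+m-1}{n} = |C|$, while $|(A_t)_{\leq j}| = |A_t\cap B_j|$ and $\binom{n+j}{n} = |B_j|$. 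Integrating the pointwise inequality against the nonnegative weights $\nu_j\,dt$ closes the Chebyshev step and hence proves the theorem.
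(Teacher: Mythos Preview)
Your proof is correct and follows essentially the same approach as the paper's: both decompose $T'$ into the $q$-boxes $B_c$ indexed by $c\in C$, apply FKG on each box, and handle the correlation across boxes by a level-set decomposition together with the lower-set version of Theorem~\ref{Hilbert-growth}. The only difference is the order of the two steps---the paper first applies the Hilbert-growth bound to the level sets $S(i)=\{p:|A_p\cap S|\ge i\}$ (which coincide with your $A_i$) and then FKG per box, whereas you apply FKG first and package the remaining step as an explicit Chebyshev inequality.
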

        \begin{proof}
            Define a function $\varphi:\N^n\to \N^n$ given by
            \[ 
                \varphi(a_1,\cdots,a_n) = \paren{\floor{\frac{a_1}q},\cdots,\floor{\frac{a_n}q}}.
            \]
            For $p\in \N^n$, define $A_p=\varphi^{-1}(p)$.
            It is clear that $|A_p|=q^n$.

            Let $R$ denote the set of monomials not in the initial terms $\In(I^m(F_q^n))$,
                viewed as a subset of $\N^n$.
            Let $M$ denote the set of all monomials.
            From the proof of Lemma \ref{multiplicity-surjection-lemma}, we see that
                $R=\varphi^{-1}(M_{\leq m-1})$.

            Let $S\subset R$ be a lower set.
            We wish to show that $|S||R_{\leq d}|\leq |S_{\leq d}||R|= |S_{\leq d}|\binom{m+n-1}{n}q^n$.
            The idea is to use $\varphi$ to `contract' the situation to one in which we can apply Theorem
                \ref{Hilbert-growth}.

            For any subset $T\subset \N^n$ define
            \[ 
                T(i) = \{ p\in \N^n \,|\, A_p \cap T \geq i\}.
            \]
            Now given $p\in \N^n$, the point $p$ appears in $T(i)$
                for $i=1,\cdots,|A_p\cap T|$.
            Thus,
            \[ 
                \sum_{i=1}^{q^n} |T(i)|
                =\sum_{p} |A_p\cap T|=|T|.
            \]
            Similarly, if $T_1$ and $T_2$ are subsets of $\N^n$, then
                $p$ appears in $T_1(i)\cap T_2(j)$ iff $1\leq i\leq |A_p\cap T_1|$
                and $1\leq j \leq |A_p\cap T_2|$.
            Thus
            \[ 
                \sum_{i,j=1}^{q^n} |T_1(i)\cap T_2(j)| = \sum_p |A_p\cap T_1||A_p\cap T_2|.
            \]
            Now we claim that if $T$ is a lower set, then so is $T(i)$ for each $i$.
            Indeed, if $p_1\geq p_2$, then we can find a bijection $\lambda: A_{p_1}\to A_{p_2}$
                such that $\lambda(x)\leq x$ for all $x\in A_{p_1}$.
            Thus, if $T$ is a lower set, then $x\in T$ implies $\lambda(x)\in T$, so that
                $|T\cap A_{p_1}|\leq|T\cap A_{p_2}|$.
            Thus $T(i)$ is also a lower set.

            Going back to the problem, consider the sets $R_{\leq d}(j)$.
            By the symmetry properties of $R$, we see that $|R_{\leq d}\cap A_p|$ depends only
                on the degree $d$ and $|p|$, the sum of the coordinates of $p$.
            Thus, $R_{\leq d}(j)=M_{\leq f_d(j)}$ for some function $f_d:\N\to \N$.
            Applying the variant of Theorem \ref{Hilbert-growth}, we see that for any lower set $T$,
            \begin{\IE}{rCl}
                |T_{\leq m-1}||R_{\leq d}(j)| = |T_{\leq m-1}|\binom{n+f_d(j)}{n} &\leq& \binom{n+m-1}{n} |T_{\leq f_d(j)}|\\
                    &=& \binom{n+m-1}{n} |T\cap R_{\leq d}(j)|.
            \end{\IE}
            Since $S\subset R$ is a lower set, $S(i)\subset M_{\leq m-1}$ are all lower sets.
            We expand
            \begin{\IE}{rCl}
                |S||R_{\leq d}| &=& \sum_{i,j=1}^{q^n} |S(i)_{\leq m-1}||R_{\leq d}(j)|\\
                    &\leq& \binom{n+m-1}{n}\sum_{i,j=1}^{q^n} |S(i)\cap R_{\leq d}(j)|\\
                    &=& \binom{n+m-1}{n}\sum_p |S\cap A_p||R_{\leq d}\cap A_p|.
            \end{\IE}
            Using the FKG inequality, we can show
            \[ 
                q^n|(S\cap A_p)_{\leq d}| \geq |(A_p)_{\leq d}||S\cap A_p|.
            \]
            Noting that $|R_{\leq d}\cap A_p|$ is either $0$ or $|(A_p)_{\leq d}|$, we see that
            \begin{\IE}{rCl}
                |S||R_{\leq d}|&\leq& q^n \binom{n+m-1}{n}\sum_p |S_{\leq d}\cap A_p| \\
                    &=& q^n \binom{n+m-1}n |S_{\leq d}|=|S_{\leq d}||R|,
            \end{\IE}
            as desired.

            Finally, to prove the theorem, let $S$ be the monomials not among the initial terms
                of $I^m(Y)$.
            We then have
            \[ 
                \HF^m(\F_q^n,d)|S| \leq\HF^m(Y,d) q^n\binom{n+m-1}{n}.
            \]
            By Lemma \ref{multiplicity-surjection-lemma}, we have $|S|=\binom{n+m-1}{n}|Y|$,
                so
            \[ 
                \HF^m(\F_q^n,d)|Y| \leq \HF^m(Y,d)q^n,
            \]
            as claimed.
        \end{proof}
        \begin{theorem}
            Let $Y\subset \F_q^n$.
            Let $X=\cl_d^{\ell,m}(Y)$.
            Then
            \[ 
                \HF^\ell(\F_q^n,d)|X|\leq q^n\binom{m+n-1}{n}|Y|.
            \]
        \end{theorem}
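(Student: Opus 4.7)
The plan is to mirror exactly the proof of Theorem \ref{finite-field-closure-bound}, replacing each ingredient by its multiplicity analogue that has already been established in this section. Writing $X = \cl_d^{\ell,m}(Y) \subseteq \F_q^n$, the desired bound should decompose as a chain of three inequalities corresponding to (i) estimating $|X|$ against $\HF^\ell(X,d)$ via the finite-field ambient space, (ii) comparing $\HF^\ell(X,d)$ with $\HF^m(Y,d)$ using the multiplicity closure relation, and (iii) bounding $\HF^m(Y,d)$ above by the stable value $\binom{m+n-1}{n}|Y|$.

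First I would apply Theorem \ref{multiplicity-set-bound} to the subset $X \subseteq \F_q^n$, which immediately yields
\[
    \HF^\ell(\F_q^n, d)\, |X| \;\leq\; \HF^\ell(X,d)\, q^n.
\]
This is the analogue of the step that in the degree $d$ closure proof invoked Theorem \ref{size-bound}, and it is exactly the multiplicity generalization of the FKG-based bound.

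Next I would use the fact that $X \subseteq \cl_d^{\ell,m}(Y)$ (trivially, with equality) together with Lemma \ref{multiplicity-hilbert-function-bound} to pass from $X$ to $Y$, obtaining
\[
    \HF^\ell(X,d) \;\leq\; \HF^m(Y,d).
\]
Finally, I would use the fact that for a finite set $Y$ the Hilbert function $\HF^m(Y,d)$ is nondecreasing in $d$ and stabilizes at $\dim_{\F_q} \F_q[x_1,\ldots,x_n]/I^m(Y) = \binom{m+n-1}{n}|Y|$, as was established in the discussion preceding Lemma \ref{finite-subset-multiplicity-surjection-lemma} (since $I^m(Y) = I^m(\{y_1\})\cdots I^m(\{y_s\})$ by the Chinese Remainder theorem). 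Hence $\HF^m(Y,d) \leq \binom{m+n-1}{n}|Y|$ unconditionally on $d$. Concatenating the three inequalities gives the claim.

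There is really no substantive obstacle here: every input has been assembled earlier in the section. The only thing requiring a moment of care is the third bound, where one should note that even for small $d$ the Hilbert function is dominated by its eventual stable value, so the inequality $\HF^m(Y,d) \leq \binom{m+n-1}{n}|Y|$ holds for \emph{all} $d$; this is what makes the final estimate clean and independent of whether $d$ is large enough to trigger Lemma \ref{multiplicity-hilbert-equality}. Assembling the three displayed inequalities in order yields
\[
    \HF^\ell(\F_q^n,d)\,|X| \;\leq\; \HF^\ell(X,d)\,q^n \;\leq\; \HF^m(Y,d)\,q^n \;\leq\; q^n \binom{m+n-1}{n}|Y|,
\]
as desired.
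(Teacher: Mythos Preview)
Your proposal is correct and follows essentially the same route as the paper: apply Theorem \ref{multiplicity-set-bound} to $X$, then Lemma \ref{multiplicity-hilbert-function-bound} to pass from $\HF^\ell(X,d)$ to $\HF^m(Y,d)$, then bound $\HF^m(Y,d)$ by its stable value $\binom{m+n-1}{n}|Y|$, yielding exactly the same chain of inequalities. The only cosmetic difference is that the paper cites Lemma \ref{multiplicity-surjection-lemma} for the last step whereas you appeal to the Chinese remainder discussion preceding Lemma \ref{finite-subset-multiplicity-surjection-lemma}; both justify the same bound.
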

        \begin{proof}
            By Theorem \ref{multiplicity-set-bound}, Lemma \ref{multiplicity-hilbert-function-bound}
                and Lemma \ref{multiplicity-surjection-lemma}, we get
            \[ 
                \HF^\ell(\F_q^n,d)|X| \leq \HF^\ell(X,d)q^n
                    \leq \HF^m(Y,d)q^n
                    \leq q^n\binom{n+m-1}{n}|Y|,
            \]
            which was what we wanted.
        \end{proof}
        We remark that, again, there is nothing special about the field $\F_q$.
        If $E_1,\cdots,E_n$ are finite subsets of $k$ and $E=E_1\times\cdots\times E_n$, 
            then we can follow the same proof as above to show that for $Y\subset E$,
        \[ 
            \HF^\ell(E,d)|\cl_d^{\ell,m}(Y)| \leq |E|\binom{n+m-1}{n}|Y|.
        \]
        The changes are similar to before.
        Let $|E_i|=s_i$.
        Then the set of monomials not an initial term of $I^m(E)$ is 
        \[ 
            \{ (m_1,\cdots,m_n) \,|\, \floor{\frac{m_1}{s_1}}+\cdots+\floor{\frac{m_n}{s_n}}\leq m-1\},
        \]
        as can be shown similarly to before
            by noting that all the others are an inital term of $I^m(E)$ and then counting.
        The only other change is to use the FKG inequality on 
            boxes of size $s_1\times \cdots \times s_n$ instead of size $q\times \cdots \times q$
            boxes in Lemma \ref{multiplicity-set-bound}.


\begin{thebibliography}{9}
        \bibitem{alon}
            N. Alon, 
                \emph{Combinatorial Nullstellensatz},
                Combinatorics Probability and Computing, \textbf{8}, 7--29 (1999).

        \bibitem{alon-tarsi}
            N. Alon, M. Tarsi,
                \emph{Colorings and Orientations of Graphs},
                Combinatorica, \textbf{12} (2), 125--134 (1992).

        \bibitem{alon-spencer}
            N. Alon, J. H. Spencer,
                \emph{The Probabilistic Method},
                Wiley Series in Discrete Mathematics and Optimization, Wiley, 2008.

        \bibitem{cox}
            D. Cox, J. Little, D. O'Shea, 
                \emph{Ideals, Varieties, and Algorithms: An Introduction
                to Computational Algebraic Geometry and Commutative
                Algebra},
                Undergraduate Texts in Mathematics, Springer, 2006.
        
        \bibitem{dvir}
            Z. Dvir, \emph{On the size of Kakeya sets in finite fields},
                J. Amer. Math. Soc. \textbf{22} (2009), no. 4, 1093--1097.

        \bibitem{dvir-kopparty-saraf-sudan}
            Z. Dvir, S. Kopparty, S. Saraf, M. Sudan, 
            \emph{Extensions of the method of multiplicities, with
                applications to Kakeya sets and mergers},
                2009 Annual IEEE Symposium on Foundations of Computer Science
                (FOCS 2009), 181--190, IEEE Competer Soc., Los Alamitos, CA, 2009.

        \bibitem{eisenbud}
            D. Eisenbud, 
            \emph{Commutative Algebra with a View Toward Algebraic
                Geometry}, Graduate Texts in Mathematics 150.
                Springer, 1999.

        \bibitem{guth}
            L. Guth, \emph{Polynomial Method Course Notes}.
                Available at 
                \url{http://math.mit.edu/~lguth/PolynomialMethod.html}.
            Notes typed 
                by Larry Guth, Adam Hesterberg, 
                Laszlo Lovasz, Rik Sengupta,
                Sean Simmons, Yufei Zhao, Gaku Liu, Yi Sun, Chiheon Kim,
                Andrey Grinshpun, Ben Yang, Efrat Shaposhnik, Sam Elder.

        \bibitem{hirschfeld}
            J.W.P. Hirschfeld, G. Korchm\'aros, F. Torres.
                \emph{Algebraic Curves over a Finite Field}.
                Princeton Series in Applied Mathematics.
                Princeton University Press, Princeton, 2008.

        \bibitem{saraf-sudan}
            S. Saraf, M. Sudan. \emph{Improved lower bound on the size
                of Kakeya sets over finite fields},
                Anal. PDE \textbf{1} (2008), no. 3, 375--379.
    \end{thebibliography}
\end{document}